\newtheorem{thmintro}{Theorem}
\newtheorem{corintro}[thmintro]{Corollary}
\newtheorem{theorem}{Theorem}[section]
\newtheorem{corollary}[theorem]{Corollary}
\newtheorem{lemma}[theorem]{Lemma}
\newtheorem{prop}[theorem]{Proposition}
\theoremstyle{definition}
\newtheorem{remarkintro}{Remark}
\newtheorem{exampleintro}{Example}
\newtheorem{remark}[theorem]{Remark}
\newcommand{\NN}{\mathbb{N}}
\newcommand{\RR}{\mathbb{R}}
\newcommand{\ZZ}{\mathbb{Z}}
\newcommand{\CCCC}{\mathcal{C}}
\newcommand{\inv}{^{-1}}
\newcommand{\la}{\langle}
\newcommand{\ra}{\rangle}
\newcommand{\co}{\colon\thinspace}
\DeclareMathOperator{\CAT}{CAT(0)}
\DeclareMathOperator{\Cent}{(Cent)}
\DeclareMathOperator{\dist}{d}
\DeclareMathOperator{\interieur}{int}
\DeclareMathOperator{\Min}{Min}
\DeclareMathOperator{\Pc}{Pc}
\begin{document}

\renewcommand{\proofname}{{\bf Proof}}

\title[Conjugacy classes and straight elements in Coxeter groups]{Conjugacy classes and straight elements in Coxeter groups}

\author[Timothée~Marquis]{Timoth\'ee \textsc{Marquis}$^*$}
\address{UCL, 1348 Louvain-la-Neuve, Belgium}
\email{timothee.marquis@uclouvain.be}
\thanks{$^*$F.R.S.-FNRS Research Fellow}

\subjclass[2010]{20F55, 20E45} 

\begin{abstract}
Let $W$ be a Coxeter group. In this paper, we establish that, up to going to some finite index normal subgroup $W_0$ of $W$, any two cyclically reduced expressions of conjugate elements of $W_0$ only differ by a sequence of braid relations and cyclic shifts. This thus provides a very simple description of conjugacy classes in $W_0$. As a byproduct of our methods, we also obtain a characterisation of straight elements of $W$, namely of those elements $w\in W$ for which $\ell(w^n)=n\thinspace\ell(w)$ for any $n\in\ZZ$. In particular, we generalise previous characterisations of straight elements within the class of so-called cyclically fully commutative (CFC) elements, and we give a shorter and more transparent proof that Coxeter elements are straight.
\end{abstract}

\maketitle

\section{Introduction}

Let $(W,S)$ be a Coxeter system. By a classical result of J.~Tits \cite{Titswordproblem} (also known as Matsumoto's theorem, see \cite{Matsumoto}), any two reduced expressions of a given element of $W$ only differ by a sequence of braid relations. This yields in particular a very simple solution to the word problem for Coxeter groups. 

In attempting to find an analoguous solution for the conjugacy problem (see \cite{Krammer} for a thorough study of this problem), one might investigate a cyclic version of this theorem, and ask \emph{whether two cyclically reduced expressions of conjugate elements of $W$ only differ by a sequence of braid relations and cyclic shifts} (see \cite{CFCCoxeter}). We recall that $w\in W$ is {\bf cyclically reduced} if every cyclic shift of any reduced expression for $w$ is still reduced. 

Although the answer to this question is ``no" in general (see Remark~\ref{remarkintro 2} below), it is probably true within the class of elements of $W$ whose parabolic closure has only infinite irreducible components (more generally, it might be true that two cyclically reduced conjugate elements can be obtained from one another by a sequence of braid relations and cyclic shifts if and only if they have the same parabolic closure). Up to now, this ``cyclic version" of Matsumoto's theorem had only be shown to hold within the class of Coxeter elements of $W$ (see \cite{MR2515767}). 
Our first theorem is a dramatic strengthening of this result. 

We say that an element $u\in W$ has property {\bf (Cent)} if whenever $u$ normalises a finite parabolic subgroup of $W$, it centralises this subgroup.

\begin{thmintro}\label{thmintro Matsumoto}
Let $u$ be a cyclically reduced element of $W$ of infinite order. Assume that $u$ has property $\Cent$. Then any cyclically reduced conjugate of $u$ can be obtained from $u$ by a sequence of braid relations and cyclic shifts.
\end{thmintro}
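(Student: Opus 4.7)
The plan is to proceed in two steps. First, I would show that $u$ and $u'$ can be connected by a chain $u = u_0, u_1, \ldots, u_n = u'$ of cyclically reduced elements with $u_{i+1} = s_i u_i s_i$ for some $s_i \in S$. Second, I would show that each such single-reflection step $u_i \mapsto u_{i+1}$ can be realised by a sequence of braid relations followed by a cyclic shift. The two steps have to be separated because a naive induction on $\ell(g)$ (for $u' = g u g^{-1}$) does not preserve cyclic reducedness of intermediate conjugates, and the notion of ``equivalent under braid relations and cyclic shifts'' only makes sense for cyclically reduced expressions.

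For the first step, I would adapt a Geck--Pfeiffer / He--Nie style argument: given a conjugator $g$, one massages $g$ so that each prefix produces a cyclically reduced conjugate of $u$. The infinite order of $u$ is essential, since it ensures that the parabolic closure $\Pc(u)$ has no finite irreducible factor containing $u$, which precludes ``trapping'' of $u$ inside a finite parabolic where cyclic reducedness could break down uncontrollably. Property $\Cent$ enters whenever $u$ normalises a finite standard parabolic encountered along the path; forcing $u$ to centralise that parabolic supplies the commutation needed to rearrange the conjugator through it without any length increase.

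For the second step, if $s_i$ is a left or right descent of $u_i$, the conjugation $u_i \mapsto s_i u_i s_i$ is directly a cyclic shift, after first using Matsumoto's theorem to realise $s_i$ as the first or last letter of a reduced expression of $u_i$. Otherwise, cyclic reducedness of both $u_i$ and $s_i u_i s_i$ forces $\ell(s_i u_i s_i) = \ell(u_i)$ with $\ell(s_i u_i) = \ell(u_i s_i) = \ell(u_i) + 1$. This compels the existence of two distinct reduced expressions of $s_i u_i$, one ending in $s_i$ and one not, linked by braid relations via Matsumoto. Unpacking this braid chain produces a reduced expression of $u_i$ beginning or ending in $s_i$ and reduces to the descent case.

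The main obstacle I anticipate is the first step. Controlling how cyclic reducedness is preserved under one-generator conjugations demands a careful combinatorial or geometric analysis, and is the point at which the hypothesis $\Cent$ is used most crucially. A natural framework would be to view $u$ as a hyperbolic isometry of the Davis complex, a $\CAT$ space, whose translation axis $A_u$ organises the set of cyclically reduced conjugates: each such conjugate corresponds to a chamber meeting $A_u$, moving between chambers along $A_u$ realises cyclic shifts, and moving within a cell realises braid relations. The infinite-order hypothesis guarantees that the axis exists and is non-degenerate, while $\Cent$ rules out pathological finite stabilisers along $A_u$ that would otherwise obstruct the connectivity of the chamber graph along the axis.
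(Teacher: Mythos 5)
Your overall architecture (reduce to single-generator conjugation steps through cyclically reduced elements, then show each step is a braid-relations-plus-cyclic-shift move) is a reasonable plan, but the proposal has a genuine gap: the entire content of the theorem sits in your ``first step,'' and you do not prove it --- you only announce that one would ``adapt a Geck--Pfeiffer / He--Nie style argument'' or invoke the axis in the Davis complex. That is precisely where the work lies. The paper's actual mechanism is quite specific: it builds the set $\CCCC_w$ by iterating convex hulls $\Gamma(D,wD)$ starting from the orbit $\{w^kC_0\}$, shows that every chamber $vC_0$ of $\CCCC_w$ satisfies $w\sim_\kappa v\inv wv$, proves that $\CCCC_w$ contains a $w$-axis (via a minimum argument for $x\mapsto \dist(x,wx)$ over $\CCCC_w$), and then, for a cyclically reduced conjugate $u=v\inv wv$ with $v$ of \emph{minimal length}, analyses the first wall $m$ at which a minimal gallery from $C_0$ to $vC_0$ (chosen to contain the geodesic between axis points) exits $\CCCC_w$: either $m$ is transverse to a $w$-axis, contradicting that the next chamber is outside $\CCCC_w$, or $m$ contains a $w$-axis, in which case $\Cent$ gives $wr_m=r_mw$ and $r_mv$ is a shorter conjugator, a contradiction. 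Your sketch does not identify this dichotomy (transverse wall versus wall containing an axis), which is the precise point where $\Cent$ enters; saying that $\Cent$ ``rules out pathological finite stabilisers that would obstruct connectivity'' is the right intuition but not an argument. Note also that your picture of the geometry is inaccurate: cyclically reduced conjugates do not all correspond to chambers meeting one fixed axis (different conjugates see different axes in $\Min(w)$), and your claim that infinite order forces $\Pc(u)$ to have no finite irreducible components is false (consider $u=(st)\cdot r$ in a product of type $\tilde A_1\times A_1$); infinite order is used only to guarantee a translation axis.

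Your second step also contains a flaw, albeit a repairable one. In the case where $s_i$ is neither a left nor a right descent of $u_i$ and $\ell(s_iu_is_i)=\ell(u_i)$, your ``unpacking the braid chain'' cannot produce a reduced expression of $u_i$ beginning or ending in $s_i$ --- that would contradict the assumption that $s_i$ is not a descent. What actually happens is the standard dichotomy: if $\ell(s_iu_i)=\ell(u_is_i)=\ell(u_i)+1$, then either $\ell(s_iu_is_i)=\ell(u_i)+2$ or $s_iu_is_i=u_i$ (apply the exchange condition to $s_iu_i$ and note that otherwise $s_i$ would be a left descent of $s_iu_is_i$, forcing $\ell(s_i\cdot s_iu_is_i)=\ell(u_is_i)<\ell(s_iu_is_i)$, which is false). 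So that case is vacuous rather than reducible to the descent case. With that correction your step two is fine, but it does not rescue the proposal: without a proof of step one, the argument does not establish the theorem.
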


This thus provides a very simple description of conjugacy classes of elements of infinite order of $W$ with property (Cent). Given a word $\omega=s_1\dots s_js_{j+1}\dots s_k$ with each $s_i\in S$ and such that $s_j=s_{j+1}$, we say that the word $\omega'=s_1\dots s_{j-1}s_{j+2}\dots s_k$ is obtained from $\omega$ by {\bf ss-cancellation}.

\begin{corintro}\label{corollary conjugacy classes}
Let $u_1$, $u_2$ be two elements of infinite order of $W$ and assume that $u_1$ has property $\Cent$. Then $u_1$ and $u_2$ are conjugate if and only if there exists some $w\in W$ obtained from each of $u_1$ and $u_2$ by a sequence of braid relations, cyclic shifts, and ss-cancellations.  
\end{corintro}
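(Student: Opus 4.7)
The backward implication is the easy direction: braid relations and ss-cancellations preserve the element of $W$ represented by a word, whereas a cyclic shift of a word representing $v$ produces a word representing a conjugate of $v$. Hence if some $w \in W$ is reached from both $u_1$ and $u_2$ by such sequences, each $u_i$ is conjugate to $w$, and so $u_1$ and $u_2$ are conjugate.

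For the forward direction, my plan is to bring each $u_i$ to a cyclically reduced conjugate $\tilde u_i$ using only cyclic shifts, ss-cancellations, and braid relations, and then to apply Theorem~A. The reduction step is classical in Coxeter group theory, going back to the cyclic-shift approach of \cite{Krammer}: starting from a reduced expression for $u_i$, each cyclic shift is either still reduced (and merely conjugates $u_i$ by its leading letter, preserving length) or becomes non-reduced, in which case a suitable sequence of braid relations exposes an adjacent pair of equal generators and an ss-cancellation yields a strictly shorter reduced expression for a conjugate of $u_i$; iterating, one reaches a cyclically reduced conjugate in finitely many steps. The resulting $\tilde u_1$ and $\tilde u_2$ are then conjugate (by transitivity) and of infinite order (this is a conjugacy invariant). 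Moreover, property $\Cent$ is itself invariant under conjugation in $W$: the class of finite parabolic subgroups is stable under $W$-conjugation, so if $\tilde u_1 = g u_1 g\inv$ normalises a finite parabolic $P$, then $u_1$ normalises the finite parabolic $g\inv P g$, hence by $\Cent$ centralises it, whence $\tilde u_1$ centralises $P$. Theorem~A now furnishes a sequence of braid relations and cyclic shifts taking $\tilde u_1$ to $\tilde u_2$; setting $w := \tilde u_2$, we reach $w$ from $u_2$ directly by the reduction, and from $u_1$ by first reducing to $\tilde u_1$ and then applying the sequence provided by Theorem~A.

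The main obstacle I anticipate is justifying the reduction step using only the three prescribed operations, rather than more general conjugations: one must verify that whenever a cyclic shift destroys reducedness, braid moves suffice to bring the offending pair of generators together so that an ss-cancellation is available. Although this is essentially folklore, it deserves to be isolated as a preliminary lemma; once granted, the corollary follows from the direct assembly above of this reduction, Theorem~A, and the conjugation-invariance of $\Cent$.
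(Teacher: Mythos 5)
Your proposal is correct and follows essentially the same route as the paper: reduce each $u_i$ to a cyclically reduced conjugate by cyclic shifts together with braid moves and ss-cancellations (the paper packages this as the relation $\sim_\kappa$ and leaves the word-level bookkeeping, which is Tits' solution of the word problem, implicit), observe that property $\Cent$ passes to conjugates, and then invoke Theorem~\ref{thmintro Matsumoto} to connect the two cyclically reduced representatives. Your version merely makes explicit two points the paper suppresses (the conjugation-invariance of $\Cent$ and the reduction lemma), so there is nothing substantive to add.
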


The proof of these results is given in Section~\ref{section Matsumoto}.

\begin{remarkintro}\label{remarkintro 1}
Note that the class of elements of infinite order with property $\Cent$ is very large. For example, it contains any torsion-free finite index normal subgroup $W_0$ of $W$ (see \cite[Lemma~1]{DraJa}). Note that the existence of such a subgroup $W_0$ is ensured by Selberg's lemma.
\end{remarkintro}

\begin{remarkintro}\label{remarkintro 1.2}
One can in fact give a slightly more precise version of property $\Cent$ under which the conclusion of Theorem~\ref{thmintro Matsumoto} still holds, by specifying the spherical parabolic subgroups that should be centralised (see Remark~\ref{remark precision Cent}). This is property {\bf (Cent')}, which is by definition satisfied by an element $u\in W$ if every conjugate $w$ of $u$ obtained from $u$ by a sequence of braid relations and cyclic shifts has the following property: whenever $w$ normalises a spherical parabolic subgroup of the form $w_IW_Jw_I\inv$ for some spherical subsets $J\subseteq I\subseteq S$ and some element $w_I\in W_I$, it centralises $w_IW_Jw_I\inv$.
\end{remarkintro}

\begin{remarkintro}\label{remarkintro 2}
Assume that $W$ possesses two conjugate standard parabolic subgroups, say $W_T=\la T\ra$ and $W_U=\la U\ra$ for some distinct subsets $T$ and $U$ of $S$. Then by \cite[Proposition~3.1.6]{Krammer} these subsets are conjugate, say $xTx\inv=U$ for some $x\in W$. Then for any cyclically reduced element $w$ whose (standard) parabolic closure is $W_T$, the element $xwx\inv$ is a cyclically reduced conjugate of $w$ that cannot be obtained from $w$ by a sequence of braid relations and cyclic shifts. Thus, if one wants to describe the conjugacy classes of arbitrary elements of $W$, one should allow, besides braid relations and cyclic shifts, conjugations by elements that conjugate a subset of $S$ to another one.

Note that the conjugate standard parabolic subgroups of $W$ were completely described by Deodhar (see e.g. \cite[Theorem~3.1.3]{Krammer}) and that this phenomenon is purely spherical: if $W_T$ is irreducible and conjugate to $W_U$ for some distinct subsets $T$ and $U$ of $S$, then $T$ is spherical.
\end{remarkintro}

Finally, we mention that a weaker version of Theorem~\ref{thmintro Matsumoto} for torsion elements of $W$ has already been studied in several papers, in connection with characters of Hecke algebras (see \cite[Theorem~2.6]{GeckKimPfeiffer} and the references in that paper). More precisely, it is known that, given an element $w$ in a finite Coxeter group $W$, one can obtain, using only braid relations, cyclic shifts and ss-cancellations, a conjugate $u$ of $w$ of minimal length in its conjugacy class. Moreover, for any two conjugate elements $u,u'\in W$ of minimal length in their conjugacy class, there is a sequence $u=u_0,u_1,\dots,u_k=u'$ and elements $x_1,\dots,x_k\in W$ such that $u_{i-1}\stackrel{x_i}{\sim}u_{i}$ for each $i=1,\dots,k$, where for any $w,w',x\in W$, we write $w \stackrel{x}{\sim}w'$ if $w'=x\inv wx$ and if $\ell(x\inv w)=\ell(x)+\ell(w)$ or $\ell(wx)=\ell(x)+\ell(w)$. As a corollary of the first statement and of our methods, we deduce in particular the following non-obvious fact, whose proof can be found at the end of Section~\ref{section Matsumoto}.

\begin{corintro}\label{corintro equiv cycl red}
Assume that $w\in W$ either has property $\Cent$ or has finite order. Then $w$ is cyclically reduced if and only if it is of minimal length in its conjugacy class.
\end{corintro}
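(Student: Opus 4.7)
The plan is to prove both implications separately. The backward direction $(\Leftarrow)$ requires no hypothesis on $w$: if $w$ has minimal length in its conjugacy class and $s_1\cdots s_k$ is any reduced expression of $w$, then for each $i$ the cyclically shifted word $s_{i+1}\cdots s_k s_1\cdots s_i$ is a word of length $k$ representing the conjugate $(s_i\cdots s_1)w(s_1\cdots s_i)$ of $w$, and by minimality this word must be reduced. Hence $w$ is cyclically reduced.

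For the forward direction, suppose $w$ is cyclically reduced and let $u$ be a conjugate of $w$ of minimal length in its conjugacy class; by the backward direction just proved, $u$ is itself cyclically reduced. Consider first the case where $w$ has infinite order and satisfies $\Cent$. I would check that $\Cent$ is conjugation-invariant (conjugates of finite parabolic subgroups are finite parabolic subgroups, and both normalising and centralising transfer through conjugation), so that $u$ also has infinite order and satisfies $\Cent$. Applying Theorem~\ref{thmintro Matsumoto} to $u$, the cyclically reduced conjugate $w$ of $u$ is obtained from $u$ by a sequence of braid relations and cyclic shifts, both of which preserve length. Hence $\ell(w)=\ell(u)$, so $w$ is of minimal length.

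Consider now the case where $w$ has finite order. I would invoke the first statement of the Geck-Kim-Pfeiffer result recalled in the introduction: from $w$ there is a sequence of braid relations, cyclic shifts and ss-cancellations leading to a minimal length conjugate. Starting from a reduced expression of the cyclically reduced element $w$, every braid relation leaves the underlying element unchanged, and every cyclic shift of a reduced expression of a cyclically reduced element again yields a reduced expression (this is immediate from the definition of cyclic reducedness) for an element which is again cyclically reduced (this last point being the key lemma). Consequently the intermediate words remain reduced throughout, no ss-cancellation can ever be triggered (a reduced word contains no adjacent repeated letters), and every move preserves the length of the underlying element. This forces $\ell(w)=\ell(u)$.

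The main obstacle will be the combinatorial lemma used in the finite-order case, namely that cyclic shifts preserve cyclic reducedness. Establishing it requires analysing how the different reduced expressions of a conjugate $svs$ (all braid-equivalent to the rotation of the original reduced expression for $v$) behave under further rotations, and I expect it to follow from the dihedral combinatorics developed in the body of the paper; alternatively, one can reformulate cyclic reducedness as the condition that $\ell(sws)=\ell(w)$ for every left descent $s$ of $w$ and verify that this property propagates under cyclic shifts of a cyclically reduced element.
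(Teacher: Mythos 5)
Your backward direction and your treatment of the infinite-order case with property $\Cent$ are correct and match the paper: the paper says this case ``readily follows'' from Theorem~\ref{thmintro Matsumoto}, and your way of filling that in (apply the theorem to the minimal-length conjugate $u$, which inherits $\Cent$ and infinite order, to get $u\sim_{\kappa}w$ and hence $\ell(w)\leq\ell(u)$, then use minimality of $u$ for the reverse inequality) is exactly right. The finite-order case, however, has a genuine gap. The Geck--Kim--Pfeiffer statement recalled in the introduction is a theorem about \emph{finite} Coxeter groups, whereas here $W$ is arbitrary and only the element $w$ is assumed to have finite order; you cannot invoke it directly on $w\in W$. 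The paper's entire proof of this case is devoted to bridging precisely this point: it first uses the geometric machinery of Section~\ref{section Matsumoto} (the complex $\CCCC_w$ together with Proposition~\ref{prop CCCC cap Min}, applied to a fixed point of the finite-order element $w$) to produce $w_1$ with $w\sim_{\kappa}w_1$ and $w_1\in W_I$ for a \emph{spherical} subset $I\subseteq S$, and only then applies Geck--Kim--Pfeiffer inside the finite group $W_I$ to obtain $w'$ of minimal length in its $W_I$-conjugacy class. Even then one is not done: minimality in the $W_I$-conjugacy class does not obviously give minimality in the larger $W$-conjugacy class, and the paper needs Lemma~\ref{lemma cyclically reduced inside Pc} (which rests on the standardness of $\Pc(w')$ and on Deodhar's description of conjugate standard parabolic subgroups) to pass from one to the other. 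Both of these steps --- the reduction to a spherical parabolic and the upgrade from SCR in $W_I$ to SCR in $W$ --- are absent from your proposal, and they are the real content of this corollary.

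Concerning your ``key lemma'' (that cyclic shifts of a cyclically reduced element remain cyclically reduced, so that no length is lost along the sequence of moves): you are right that something of this kind is needed to conclude $\ell(w)=\ell(u)$ at the end, but note that the paper does not prove it either; it instead relies on its earlier assertion that $\sim_{\kappa}$ restricted to cyclically reduced elements is an equivalence relation, so that $w'\sim_{\kappa}w$ and hence $\ell(w')\geq\ell(w)$. This point is therefore a debt shared with the paper rather than a defect specific to your write-up, though as it stands you have flagged it without discharging it. The decisive missing ingredients remain the two structural steps described above.
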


\noindent
A second related problem, which we investigate in this paper as well, is to characterise the straight elements of $W$ where, following \cite{Krammer}, we call an element $w\in W$ {\bf straight} if $\ell(w^n)=n\thinspace\ell(w)$ for all $n\in\NN$.

The question whether Coxeter elements are straight has been the object of several papers (see \cite{wkreduced} and references therein) and this question has only been settled in full generality in 2009 by D.~Speyer (see \emph{loc.cit.}). 

More generally, one might try to find natural necessary and sufficient conditions for an arbitrary element $w\in W$ to be straight. This problem for instance motivated the paper \cite{CFCCoxeter}, in which the authors characterise the straight elements in the class of so-called \emph{cyclically fully commutative} (CFC) elements (which ``generalises" in some sense the class of Coxeter elements), provided that $W$ satisfies some additional technical condition.

An obvious requirement for an arbitrary element $w\in W$ to be straight is that it should be cyclically reduced (see e.g. Lemma~\ref{lemma log implies} below). This is one of the two conditions considered in \cite{CFCCoxeter}; the second condition is what the authors of \emph{loc.cit.} call \emph{torsion-freeness}, namely, the fact for an element $w\in W$ to have a standard parabolic closure with only infinite irreducible components. This second condition is however too restrictive in general. In this paper, we give the following less restrictive definition of torsion-freeness (which coincides with the one given in \emph{loc.cit.} within the class of CFC elements, see Lemma~\ref{lemma FC tf}): an element $w\in W$ is {\bf torsion-free} if $w$ has no reduced decomposition of the form $w=w_In_I$ for some spherical subset $I\subseteq S$, some $w_I\in W_I\setminus\{1\}$ and some $n_I$ normalising $W_I$. This is indeed an obvious requirement for $w$ to be straight (see Lemma~\ref{lemma log implies}). 

Our second theorem states that these two conditions are the only possible obstructions to straightness, without any restriction on $W$ or on the class of elements of $W$ considered. 

\begin{thmintro}\label{mainthm intro}
Let $(W,S)$ be a Coxeter system and let $u\in W$ be cyclically reduced. Then the following are equivalent:
\begin{itemize}
\item[(1)] $u$ is straight.
\item[(2)] Every cyclically reduced conjugate of $u$ obtained from $u$ by a sequence of braid relations and cyclic shifts is torsion-free.
\end{itemize}
\end{thmintro}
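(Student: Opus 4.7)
The plan is to prove the two implications in Theorem~\ref{mainthm intro} separately. For $(1)\Rightarrow(2)$, I would first invoke Lemma~\ref{lemma log implies}, which according to the introduction establishes that any straight element is cyclically reduced and torsion-free. The remaining task is to show that, for a straight element $u$, every cyclically reduced conjugate $v$ of $u$ obtained from $u$ via braid relations and cyclic shifts is itself straight; then torsion-freeness of $v$ will follow from the lemma. Braid relations preserve the underlying element, hence straightness trivially. For a single cyclic shift $u=su'\mapsto v=u's=sus$, one computes $v^n=su^ns$ and uses straightness of $u$ (which gives a reduced expression $(su')^n$ for $u^n$, starting with $s$ and ending with a letter $s_k\neq s$ by cyclic reducedness), together with a parity-of-length argument on the conjugate $su^ns$, to conclude that $(u's)^n$ is reduced, whence $\ell(v^n)=n\ell(v)$.

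For the converse $(2)\Rightarrow(1)$, I would argue by contrapositive: assuming $u$ cyclically reduced but not straight, I will exhibit a cyclically reduced conjugate of $u$, reachable from $u$ by braid relations and cyclic shifts, that fails to be torsion-free. Let $n\geq 2$ be minimal with $\ell(u^n)<n\ell(u)$; by the parity of length in a Coxeter group, $\ell(u^n)\leq n\ell(u)-2$. Fix a reduced expression $\omega$ for $u$ and consider the concatenation $\omega^n$, a word of length $n\ell(u)$ representing $u^n$ non-reducedly. By the deletion condition together with Tits's solution to the word problem, one may apply a finite sequence of braid relations to $\omega^n$ producing a word containing two equal adjacent letters. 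By minimality of $n$, no such cancellation can take place within any $n-1$ consecutive copies of $\omega$ alone, so the reduction must straddle the boundary between two adjacent copies. The strategy is to reinterpret these braid manipulations within $u^n$ as braid relations and cyclic shifts applied to $u$, ultimately isolating a spherical parabolic ``block'' at one end of a reduced expression of a conjugate $v$ of $u$; this block will provide the sought-after decomposition $v=w_In_I$ with $W_I$ spherical, $w_I\in W_I\setminus\{1\}$, and $n_I$ normalising $W_I$, contradicting~(2).

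The main obstacle is this last extraction step. The guiding heuristic is that every braid relation has support in a dihedral, hence spherical, parabolic subsystem, so that the chained braid relations carried out at the boundary between two copies of $\omega$ collectively commit to a finite ``pocket'' $W_I$. The challenge is to realise this pocket as an honest sub-expression $w_I$ at one end of some cyclic rotation of $\omega$, with the complementary factor $n_I$ normalising $W_I$. I expect this to require a careful tracking argument — following the cancelling letters backwards through the reduction sequence and repeatedly applying cyclic shifts to ``align'' the pocket with a boundary — together with an analysis reminiscent of the one underlying Theorem~\ref{thmintro Matsumoto}. In contrast to that theorem, however, the hypothesis $\Cent$ is not available here, so the control of the pocket must come purely from its spherical character, which makes this extraction the most delicate part of the argument.
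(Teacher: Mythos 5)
The forward direction $(1)\Rightarrow(2)$ of your proposal is essentially sound and close to the paper's: since $u$ is cyclically reduced, every conjugate reachable by braid relations and cyclic shifts has length $\ell(u)$, and the paper's Lemma~\ref{lemma log iff conj log} (an asymptotic comparison $\ell(v^{Kn})\geq\ell(u^{Kn})-2\ell(v)$) shows straightness is preserved under any length-preserving conjugation; combined with Lemma~\ref{lemma log implies} this gives (2). Your shift-by-shift argument with a parity analysis of $su^ns$ is a more laborious route to the same fact and would need more care than you give it, but it is not where the problem lies.

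The problem is the converse. Your contrapositive strategy --- take the first non-reduced power $\omega^n$, apply Tits's theorem to produce an ss-cancellation straddling a boundary between copies of $\omega$, and then ``extract'' a spherical block $w_I$ with normalising complement $n_I$ at one end of a cyclic rotation of $\omega$ --- is left as a hope precisely at its crux, and I see no way to complete it as stated. The braid moves that create the cancellation are not localised: the two letters that eventually cancel can migrate arbitrarily far through the word, and the dihedral supports of the successive braid relations need not lie in, let alone generate, a single finite parabolic subgroup, so the ``pocket'' $W_I$ does not emerge from this bookkeeping. Even granting a distinguished spherical $I$, nothing in the combinatorics forces the complementary factor to normalise $W_I$, which is the whole content of torsion-freeness. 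The paper obtains exactly this datum geometrically: by Proposition~\ref{prop CCCC cap Min} and Corollary~\ref{corollary sim to C0} one replaces $u$ by a $\kappa$-related conjugate $w$ admitting an axis through a point $x\in C_0$ lying on no $w$-essential wall; the support of $x$ is a spherical face $W_I$, every wall through $x$ contains the whole axis, so $w$ normalises $W_I$, and Lemma~\ref{lemma normaliser} hands you the decomposition $w=w_In_I$ with $\ell(w)=\ell(w_I)+\ell(n_I)$; Lemma~\ref{lemma log if ess} then shows $n_I$ is straight, so $w$ is straight if and only if $w_I=1$, i.e.\ if and only if $w$ is torsion-free (Lemma~\ref{lemma straight iff tf}). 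Note also that the paper needs only this \emph{one} well-chosen conjugate to be torsion-free, whereas your contrapositive must manufacture some non-torsion-free conjugate from scratch. To repair your proof you would have to import this CAT(0)/Davis-complex input (or an equivalent root-system argument identifying the finite parabolic fixing the axis); the purely word-combinatorial extraction does not go through.
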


\begin{exampleintro}
Let $W$ be the affine Coxeter group of type $\tilde{A_2}$, with set of generators $S=\{s,t,u\}$. Thus $(st)^3=(tu)^3=(su)^3=1$. Consider the cyclically reduced element $w\in W$ with reduced expression $w=tustuts$. Note then that $w^2=(tus)^4$ and hence $w$ is not straight. Note also that $w$ is torsion-free as it does not normalise any nontrivial finite standard parabolic subgroup of $W$. However, its cyclic shift $sws=(stustu)t=t(stustu)$ is not torsion-free (with $I=\{t\}$). 
\end{exampleintro}

As a corollary, we generalise the characterisation of straight CFC elements given in \cite[Corollary~7.2]{CFCCoxeter} by removing the technical assumption on $W$ considered in that paper. We recall that an element $w\in W$ is {\bf fully commutative} (FC) if any two reduced expressions for $w$ can be obtained from one another by iterated commutations of commuting generators. It is moreover {\bf cyclically fully commutative} (CFC) if every cyclic shift of any reduced expression of $w$ is a reduced expression for a FC element. We also recall that the {\bf standard parabolic closure} of an element $w\in W$ is the smallest standard parabolic subgroup of $W$ containing $w$, or else the subgroup of $W$ generated by the elements of $S$ appearing in any reduced decomposition of $w$.

\begin{corintro}\label{corintro CFC}
Let $w$ be a CFC element of $W$. Then the following are equivalent:
\begin{itemize}
\item[(1)] $w$ is straight.
\item[(2)] The standard parabolic closure of $w$ has only infinite irreducible components.
\end{itemize}
\end{corintro}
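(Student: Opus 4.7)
The plan is to deduce Corollary~\ref{corintro CFC} from Theorem~\ref{mainthm intro} together with the auxiliary result referred to as Lemma~\ref{lemma FC tf} in the introduction, which asserts that for a CFC element, torsion-freeness (in the general sense of this paper) is equivalent to the standard parabolic closure having only infinite irreducible components. Since by definition every CFC element is cyclically reduced, Theorem~\ref{mainthm intro} applies directly to $w$.

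For the implication $(1)\Rightarrow(2)$, if $w$ is straight then Theorem~\ref{mainthm intro} implies in particular that $w$ itself (a cyclically reduced conjugate of $w$ obtained by the empty sequence of braid relations and cyclic shifts) is torsion-free, at which point Lemma~\ref{lemma FC tf} immediately yields the condition on the parabolic closure.

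For the converse, assume (2) and let $w'$ be an arbitrary cyclically reduced conjugate of $w$ obtained from $w$ by a sequence of braid relations and cyclic shifts. By Theorem~\ref{mainthm intro}, it suffices to prove that $w'$ is torsion-free. The key structural observation is that along such a sequence both the CFC property and the set of letters used in a reduced expression are preserved: braid relations do not change the underlying element, while cyclic shifts preserve CFC-ness (a standard fact that follows from the definition of CFC, using the classical statement that two reduced expressions of a FC element differ only by commutations of commuting generators) and obviously preserve the support. Consequently $w'$ is itself a CFC element whose standard parabolic closure coincides with that of $w$, and so by hypothesis has only infinite irreducible components. Applying Lemma~\ref{lemma FC tf} to $w'$ then gives its torsion-freeness, as required.

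The only point to be verified with some care is the stability of the CFC property under cyclic shifts; everything else is either immediate or already packaged in Theorem~\ref{mainthm intro} and Lemma~\ref{lemma FC tf}. I do not expect a serious obstacle: the real content of the corollary resides in Theorem~\ref{mainthm intro}, and the role of Lemma~\ref{lemma FC tf} here is essentially bookkeeping, translating the paper's general notion of torsion-freeness into the more concrete condition on the parabolic closure used in \cite{CFCCoxeter}.
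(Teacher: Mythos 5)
Your argument is correct and follows essentially the same route as the paper: the content in both cases is that cyclic shifts preserve the CFC property and the support (the paper cites \cite[Proposition~4.1]{CFCCoxeter} for the former, just as you would need to), so that Lemma~\ref{lemma FC tf} converts hypothesis (2) into torsion-freeness of the relevant conjugates, and Theorem~\ref{mainthm intro} finishes. The only structural difference is cosmetic: for $(2)\Rightarrow(1)$ the paper does not verify condition (2) of Theorem~\ref{mainthm intro} for \emph{every} conjugate obtained by braid relations and cyclic shifts, but instead invokes Lemma~\ref{lemma CFC help}, which supplies a single conjugate $w'$ with ``$w$ straight iff $w'$ torsion-free'' and applies Lemma~\ref{lemma FC tf} only to that $w'$; your uniform verification works equally well.

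One small caveat on $(1)\Rightarrow(2)$: you invoke Lemma~\ref{lemma FC tf} in the converse direction to the one it actually states. The lemma says that a non-torsion-free FC element has a nontrivial spherical irreducible component in its standard parabolic closure; what you need is that a spherical irreducible component forces non-torsion-freeness. That implication is true and elementary -- if $W_I$ is a finite irreducible component of the standard parabolic closure, write $w=w_In_I$ with $w_I\in W_I\setminus\{1\}$ and $n_I$ in the product of the remaining components, which centralises $W_I$ -- and it is exactly what the paper uses implicitly when it deduces $(1)\Rightarrow(2)$ from Lemma~\ref{lemma log implies}; just note that it is not literally the statement of Lemma~\ref{lemma FC tf}.
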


In particular, this yields a shorter and more transparent proof that Coxeter elements are straight.
\begin{corintro}\label{corintro Coxeter element}
Let $u=s_1\dots s_n$ be a Coxeter element in $W$. Then $u$ is straight if and only if $W$ has only infinite irreducible components.
\end{corintro}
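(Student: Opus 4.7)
The plan is to deduce this corollary directly from Corollary~\ref{corintro CFC}, reducing it to two simple observations about any Coxeter element $u=s_1\dots s_n$: that $u$ is CFC, and that its standard parabolic closure is the whole of $W$.

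For the first observation, I would argue as follows. Any reduced expression for $u$ is obtained from $s_1\dots s_n$ by a sequence of braid relations (by Matsumoto's theorem). A braid relation $sts\cdots=tst\cdots$ with $s\neq t$ non-commuting requires each of $s,t$ to appear at least twice in the word, whereas $s_1\dots s_n$ uses each element of $S$ exactly once. Hence only commutations of commuting generators can be applied, so $u$ is fully commutative, and every reduced expression for $u$ is still a product of the $n$ elements of $S$ each occurring exactly once. Any cyclic shift of such a word is again a product of all elements of $S$, each occurring exactly once, hence is itself a reduced expression of some Coxeter element, and is therefore reduced and FC by the same argument. This shows that $u$ is cyclically reduced and CFC.

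For the second observation, the standard parabolic closure of $u$ is, by definition, the standard parabolic subgroup generated by the elements of $S$ appearing in any reduced decomposition of $u$; since every generator of $S$ appears in the reduced decomposition $s_1\dots s_n$, this closure is all of $W$. Applying Corollary~\ref{corintro CFC}, we conclude that $u$ is straight if and only if $W$ has only infinite irreducible components.

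The only nontrivial ingredient in this argument is Corollary~\ref{corintro CFC} itself (and, behind it, Theorem~\ref{mainthm intro}); given those, the deduction is essentially formal. The main potential subtlety I would keep an eye on is the verification that no braid relation can apply to a word using each generator only once, but this is immediate from the shape of the braid relations in a Coxeter system.
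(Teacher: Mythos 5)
Your proposal is correct and follows exactly the paper's route: the paper's proof of this corollary is the one-line observation that Coxeter elements are CFC, followed by an appeal to Corollary~\ref{corintro CFC}. You simply spell out the (standard, and correctly argued) verification that Coxeter elements are CFC with full standard parabolic closure, which the paper takes as known.
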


The proof of these results is given in Section~\ref{section proof of thm}.

\section{Preliminaries}

\subsection{Basic definitions}
Basics on Coxeter groups and complexes can be found in \cite[Chapters~1--3]{BrownAbr}.

Let $(W,S)$ be a Coxeter system with associated Coxeter complex $\Sigma=\Sigma(W,S)$ and with set of roots (or half-spaces) $\Phi$. We denote by $C_0=\{1_W\}$ the fundamental chamber of $\Sigma$ and by $\{\alpha_s | s\in S\}$ the set of simple roots. We write $\ell(w)$ for the length of an element $w\in W$, that is, the number of generators (from $S$) appearing in a reduced decomposition of $w$. 

We recall that a {\bf gallery} $\Gamma$ between two chambers $D$ and $E$ of $\Sigma$ is a sequence $D=D_0,D_1,\dots,D_k=E$ of chambers such that for each $i=1,\dots,k$, the chambers $D_{i-1}$ and $D_{i}$ are $s_i$-adjacent for some $s_i\in S$, where adjacency is understood as adjacency in the Cayley graph of $(W,S)$. The integer $k\geq 1$ is called the {\bf length} of $\Gamma$, and the sequence $(s_1,s_2,\dots,s_k)\in S^k$ is its {\bf type}. A {\bf minimal gallery} between the chambers $C$ and $D$ is then a gallery between $C$ and $D$ of minimal length. Given two chambers $vC_0$ and $wvC_0$ ($v,w\in W$), the function associating to a gallery its type establishes a bijective correspondance between minimal galleries from $vC_0$ to $wvC_0$ and reduced expressions for $v\inv wv\in W$. In particular, the length $\ell(w)$ of an element $w\in W$ is also the length of a minimal gallery from $C_0$ to $wC_0$, or else the number of walls crossed by such a gallery, that is, the number of walls separating $C_0$ from $wC_0$.

For a subset $I$ of $S$, we let $W_I$ denote the corresponding {\bf standard parabolic subgroup}, that is, the subgroup of $W$ generated by $I$. Conjugates in $W$ of standard parabolic subgroups are called {\bf parabolic subgroups} of $W$. It is a standard fact that any intersection of parabolic subgroups of $W$ is again a parabolic subgroup, and it thus makes sense to define the {\bf parabolic closure} of a subset $H$ of $W$, which we denote by $\Pc(H)$. In case $H$ is a singleton $\{w\}$, we will also write $\Pc(w):=\Pc(\{w\})$. 

For a subset $I$ of $S$, we also let $\Phi_I$ denote the subset of $\Phi$ consisting of the roots of the form $v\alpha_s$ for some $v\in W_I$ and $s\in I$. Finally, we write $N_W(W_I)$ for the normaliser of $W_I$ in $W$ and we let $N_I$ denote the stabiliser in $W$ of the set of roots $\{\alpha_s | s\in I\}$. The following lemma follows from \cite[Lemma~5.2]{Lusztig}.

\begin{lemma}\label{lemma normaliser}
Let $I$ be a spherical subset of $S$. Then $N_W(W_I)=W_I\rtimes N_I$ and $\ell(w_In_I)=\ell(w_I)+\ell(n_I)$ for any $w_I\in W_I$ and $n_I\in N_I$.
\end{lemma}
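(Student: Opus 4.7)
The plan is to recover the structure of $N_W(W_I)$ via an analysis of minimal-length coset representatives, along the lines of \cite[Lemma~5.2]{Lusztig}. First I would verify the trivial inclusion $N_I\subseteq N_W(W_I)$: for $n\in N_I$, the action $n\alpha_s=\alpha_{\sigma(s)}$ defines a permutation $\sigma$ of $I$, and the standard identity $nsn\inv=s_{n\alpha_s}=\sigma(s)$ yields $nW_In\inv=W_I$. Next I would check that $W_I\cap N_I=\{1\}$: an element $w\in W_I\cap N_I$ sends every simple root of $W_I$ to a positive root of $\Phi_I$, so $\ell(ws)>\ell(w)$ for all $s\in I$, and this forces $w=1$ (otherwise the last letter of any reduced expression of $w$ in $W_I$ would produce a descent $\ell(ws)<\ell(w)$).

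The substantial step is showing that every $w\in N_W(W_I)$ decomposes as $w=w_I n_I$ with $w_I\in W_I$ and $n_I\in N_I$. I would let $n_I$ be the element of minimal length in the right coset $W_Iw$; since $w$ normalises $W_I$, we have $W_Iw=wW_I$, so $n_I$ itself normalises $W_I$. Coset-minimality yields $\ell(sn_I)>\ell(n_I)$, equivalently $n_I\inv\alpha_s\in\Phi^+$, for each $s\in I$. On the other hand, normality forces the reflection $n_I\inv sn_I=s_{n_I\inv\alpha_s}$ to lie in $W_I$, whence $\pm n_I\inv\alpha_s\in\Phi_I$; combined with positivity this gives $n_I\inv\alpha_s\in\Phi_I^+$. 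Thus $n_I\inv$ maps $\Phi_I^+$ into itself; because $I$ is spherical, $\Phi_I^+$ is \emph{finite}, so $n_I\inv$ in fact permutes $\Phi_I^+$ and therefore permutes its extremal rays, which are precisely the simple roots $\{\alpha_s\mid s\in I\}$. Hence $n_I\in N_I$, and $w=(wn_I\inv)n_I\in W_I\cdot N_I$.

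The length formula then follows from the coset-minimality of $n_I$: a standard application of the exchange condition gives $\ell(w_In_I)=\ell(w_I)+\ell(n_I)$ for every $w_I\in W_I$ as soon as $n_I$ is the shortest element of $W_In_I$. Combined with the first two paragraphs, this length additivity forces the product decomposition to be unique and delivers the asserted semidirect product $N_W(W_I)=W_I\rtimes N_I$. The main obstacle of the argument is the step just highlighted: promoting the coset-minimality statement ``$n_I\inv$ sends each simple root to a positive root of $\Phi_I$'' to the much stronger ``$n_I\inv$ permutes the simple roots of $W_I$''. This is exactly where the sphericity of $I$ enters, via the finiteness of $\Phi_I^+$, which upgrades ``maps into'' to ``permutes''.
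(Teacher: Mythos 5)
Your proof is correct, but note that the paper does not actually prove this lemma: it simply states that it ``follows from \cite[Lemma~5.2]{Lusztig}'', so there is no in-paper argument to compare against. What you have written is a correct, self-contained version of the standard normaliser decomposition for a spherical standard parabolic, and it is essentially the argument underlying the cited reference: take the minimal-length representative $n_I$ of the coset $W_Iw$, use coset-minimality to get $n_I\inv\alpha_s\in\Phi^+$ and normality to get $n_I\inv\alpha_s\in\Phi_I$, hence $n_I\inv\alpha_s\in\Phi_I^+$; then finiteness of $\Phi_I^+$ upgrades ``maps into'' to ``permutes'', and the extremal rays of the cone on $\Phi_I^+$ recover the simple roots. (One could avoid finiteness here by running the symmetric computation $\alpha_s=n_I(n_I\inv\alpha_s)$ with nonnegative coefficients on both sides, which is how the decomposition is usually proved for arbitrary $I$; but your finiteness shortcut is perfectly valid in the spherical case, which is all the lemma claims.) Two small points you should make explicit: the passage from ``$n_I\inv\alpha_s\in\Phi_I^+$ for $s\in I$'' to ``$n_I\inv$ maps all of $\Phi_I^+$ into itself'' uses $n_I\inv W_In_I=W_I$ together with the fact that a nonnegative combination of positive roots is positive; and the length formula is asserted for \emph{every} $n_I\in N_I$, so you should observe that every element of $N_I$ is automatically the shortest element of its coset $W_In_I$ (since it sends each $\alpha_s$, $s\in I$, to a simple, hence positive, root). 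Neither is a gap, just a line to add.
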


\subsection{Davis complex}\label{subsection Davis complex}
Let $|\Sigma|$ be the standard geometric realisation of $\Sigma$. We denote by $X$ the Davis complex of $W$. Thus $X$ is a $\CAT$ subcomplex of the barycentric subdivision of $|\Sigma|$ on which $W$ acts by cellular isometries. Moreover, each point $x\in X$ determines a unique spherical simplex of $\Sigma$, called its {\bf support}. In this paper, we identify the walls, roots and simplices in $\Sigma$ with their (closed) realisation in $X$. More precisely, we will identify a simplex $A$ of $\Sigma$ with the set of $x\in X$ whose support is a face of $A$. In particular, we view chambers as closed subsets of $X$. 

\subsection{Actions on CAT(0)-spaces}
Consider the $W$-action on $X$. For an element $w\in W$, we let $$|w|:=\inf\{d(x,w x) \ | \ x\in X\}\in [0,\infty)$$ denote its {\bf translation length} and we set $$\Min(w):=\{x\in X \ | \ d(x,wx)=|w|\}.$$ A standard result of M.~Bridson (see \cite{Bridson}) asserts that such an action is {\bf semi-simple}, meaning that $\Min(w)$ is nonempty for any $w\in W$. If $w$ has finite order, then $w$ has a fixed point (see e.g. \cite[Theorem~11.23]{BrownAbr}). If $w$ has infinite order, then $|w|\neq 0$ and $\Min(w)$ is the union of all the $w$-axes, where a {\bf $w$-axis} is a geodesic line stabilised by $w$ (on which $w$ then acts by translation). Basics on $\CAT$ spaces may be found in \cite{BHCAT0}. 

\subsection{Walls}
Let $w\in W$ be of infinite order. Given a $w$-axis $L$, we say that a wall is \textbf{transverse} to $L$ if it intersects $L$ in a single point. We call a wall $m$ \textbf{$w$-essential} if it is transverse to some $w$-axis. 

We recall that the intersection of a wall and any geodesic segment in $X$ that is not completely contained in that wall is either empty or consists of a single point (see \cite[Lemma~3.4]{singlepoint}). Given $x, y \in X$, we say that a wall $m$ \textbf{separates} $x$ from $y$ if the intersection of $m$ with the geodesic segment joining $x$ to $y$ consists of a single point.

\section{A cyclic version of Matsumoto's theorem}\label{section Matsumoto}
This section is devoted to the proof of Theorem~\ref{thmintro Matsumoto} and of its corollaries. Throughout this section, we fix a Coxeter system $(W,S)$.

Let $w\in W$. Given two chambers $C,D$ of $\Sigma$, we denote by $\Gamma(C,D)$ their {\bf convex hull}, namely, the reunion of all minimal galleries from $C$ to $D$, viewed as a subcomplex of $\Sigma$. As mentioned in Section~\ref{subsection Davis complex}, we will also view $\Gamma(C,D)$ as a closed subcomplex of $X$. Let $\CCCC_0$ be the set of chambers of $\Sigma$ of the form $w^kC_0$ for some $k\in\ZZ$, and define inductively $$\CCCC_{i+1}:=\bigcup_{D\in\CCCC_i}{\Gamma(D,wD)}\subseteq X$$ for each $i\geq 0$. Set $\CCCC_w:=\bigcup_{i\geq 0}{\CCCC_i}\subseteq X$.

Before describing the basic properties of $\CCCC_w$, we need the following technical lemma.
\begin{lemma}\label{lemma geodesic in gallery}
Let $C$ and $D$ be two chambers of $\Sigma$, and pick points $x\in C\subset X$ and $y\in D\subset X$. Then there exists a minimal gallery from $C$ to $D$ containing the geodesic segment from $x$ to $y$.
\end{lemma}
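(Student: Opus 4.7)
The plan is to combine the $\mathrm{CAT}(0)$-convexity of the combinatorial convex hull $\Gamma(C,D)$ inside the Davis complex $X$ with a genericity argument.

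First I would observe that $\Gamma(C,D)$ is convex in $X$. A chamber $E$ of $\Sigma$ lies on a minimal gallery from $C$ to $D$ if and only if $E$ is contained in every closed half-space (root) of $X$ containing both $C$ and $D$; equivalently, viewed as a closed subcomplex of $X$, $\Gamma(C,D)$ coincides with the intersection of all such roots, and is thus convex as an intersection of totally geodesic closed half-spaces. In particular, the geodesic $[x,y]$ lies inside $\Gamma(C,D)$.

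Next, I would produce the desired minimal gallery by a perturbation argument. Choose sequences $x_n\in\operatorname{int}(C)$ and $y_n\in\operatorname{int}(D)$ with $x_n\to x$ and $y_n\to y$, generic enough that each segment $[x_n,y_n]$ meets every wall of $\Gamma(C,D)$ transversely at an interior point of a panel, and in particular avoids every codimension-$\geq 2$ intersection of walls of $\Gamma(C,D)$. Such a segment then traces a sequence of pairwise adjacent chambers $C=C_0^n,\dots,C_{k_n}^n=D$. Since $x_n$ and $y_n$ lie in the open chambers $C$ and $D$, each traversed wall separates $C$ from $D$; and since a wall meets a geodesic not contained in it in at most one point, the crossings correspond to pairwise distinct walls. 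Hence this gallery has length exactly $\ell(C,D)$ and is minimal. There being only finitely many minimal galleries from $C$ to $D$, along a subsequence $(C_0^n,\dots,C_{k_n}^n)$ stabilises to a fixed minimal gallery $(C_0,\dots,C_k)$. The union $\bigcup_{i=0}^k C_i$ is closed in $X$ and contains each $[x_n,y_n]$, so it also contains $[x,y]$ in the limit, as required.

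The main technical point to justify is the genericity of the approximating pairs: the locus of $(x',y')\in\operatorname{int}(C)\times\operatorname{int}(D)$ whose geodesic meets some codimension-$\geq 2$ stratum of the wall arrangement of $\Gamma(C,D)$ should have positive codimension in $C\times D$, so that good pairs are dense near $(x,y)$. This should follow from the piecewise-Euclidean structure of the Davis complex: each chamber in $\Gamma(C,D)$ is a compact Euclidean polytope and the walls are totally geodesic of codimension~$1$, so meeting a fixed codimension-$k$ intersection of walls imposes $k-1$ independent linear conditions on the endpoints. Taking a finite union over the codimension-$\geq 2$ strata of $\Gamma(C,D)$ preserves positive codimension and yields the required density of good pairs.
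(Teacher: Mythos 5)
Your overall skeleton (approximate $x,y$ by interior points, note that each approximating geodesic lies in one of the finitely many minimal galleries, pass to a subsequence so the gallery stabilises, and conclude by closedness) is exactly the paper's argument. The difference is that where the paper simply invokes \cite[Proposition~12.25]{BrownAbr} for interior points, you try to prove that case directly by a genericity argument, and that is where there is a genuine gap. The claim that meeting a codimension-$k$ intersection of walls ``imposes $k-1$ independent linear conditions on the endpoints'' is only valid when the geodesic is an affine function of its endpoints, i.e.\ inside a single Euclidean cell. In the Davis complex the geodesic $[x',y']$ is only piecewise linear, and codimension-$\geq 2$ strata whose links have excess angle \emph{attract} geodesics: if $v$ is a vertex of type $\{s,t\}$ in the Davis complex of, say, the $(4,4,4)$ triangle group, the link of $v$ is a circle of length $>2\pi$, and for two chambers $C,D$ opposite in the $\{s,t\}$-residue at $v$ there is a nonempty \emph{open} set of pairs $(x',y')\in\interieur(C)\times\interieur(D)$ whose geodesic passes through $v$ (the two angular gaps at $v$ both exceed $\pi$, so $[x',v]\cup[v,y']$ is a local, hence global, geodesic). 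Thus ``good'' pairs need not be dense near $(x,y)$, and your approximating sequence may fail to exist. Note also that when the geodesic does pass through such a stratum, the chambers it successively meets need not be pairwise adjacent, so the ``traced gallery'' is not a gallery and the rest of your count breaks down (the lemma itself survives in this example only because $[x',v]\subseteq C$ and $[v,y']\subseteq D$, which is a different argument).

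The parts of your proof that do work are fine: the convexity of $\Gamma(C,D)$ as an intersection of roots (though this only gives containment in the \emph{union} of all minimal galleries, which is weaker than the statement and not sufficient for the use made of the lemma in the proof of Theorem~\ref{theorem matsumoto}), the fact that each wall crossed transversely by $[x_n,y_n]$ separates $C$ from $D$ and is crossed exactly once, and the final limiting step (which implicitly, and correctly, uses continuity of CAT(0) geodesics in their endpoints). To repair the proof you should either quote \cite[Proposition~12.25]{BrownAbr} for interior points, as the paper does, or give an argument for interior points that does not rely on perturbing away from the singular strata.
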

\begin{proof}
If $x$ and $y$ lie in the interior of $C$ and $D$ respectively, then the lemma follows from \cite[Proposition~12.25]{BrownAbr}. 
In the general case, there exists a sequence $(x_n)_{n\in\NN}$ (resp. $(y_n)_{n\in\NN}$) of points in the interior of $C$ (resp. $D$) that converges to $x$ (resp. $y$). Thus for each $n\in\NN$, the geodesic segment $[x_n,y_n]$ from $x_n$ to $y_n$ is entirely contained in some minimal gallery from $C$ to $D$. As there are only finitely many such galleries, we may assume, up to choosing a subsequence, that $[x_n,y_n]$ is entirely contained in a given minimal gallery $\Gamma$ between $C$ and $D$ for all large enough $n$. As $\Gamma$ is closed, it then also contains the geodesic segment $[x,y]$ from $x$ to $y$, as desired. 
\end{proof}

\begin{lemma}\label{lemma basic prop CCCC}
We have the following:
\begin{itemize}
\item[(1)]
Each $\CCCC_i$, $i\in\NN$, is $w$-stable. In particular, $\CCCC_w$ is $w$-stable.
\item[(2)]
$\CCCC_w$ is a closed subset of $X$.
\item[(3)]
Given a point $x\in\CCCC_w\subseteq X$, the geodesic from $x$ to $wx$ is entirely contained in $\CCCC_w$. 
\end{itemize}
\end{lemma}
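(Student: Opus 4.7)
The plan is to handle the three parts in order, with part~(3) being the main content and resting on Lemma~\ref{lemma geodesic in gallery}.

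For~(1), I would argue by induction on $i$. The base case $\CCCC_0=\{w^kC_0 \ | \ k\in\ZZ\}$ is immediate since $w\cdot w^kC_0=w^{k+1}C_0$. For the inductive step, as $w$ acts on $X$ by cellular isometries mapping chambers to chambers, one has $w\cdot\Gamma(D,wD)=\Gamma(wD,w^2D)$ for each chamber $D\in\CCCC_i$; reindexing $D'=wD$ and using $w$-stability of $\CCCC_i$ then gives $w\cdot\CCCC_{i+1}=\CCCC_{i+1}$, and hence $w\cdot \CCCC_w=\CCCC_w$.

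For~(2), the key observation is that each $\CCCC_i$ is a union of closed chambers of $\Sigma$, hence a subcomplex of $X$. Moreover $\CCCC_i\subseteq\CCCC_{i+1}$ for every $i$: any chamber $D\in\CCCC_i$ is contained in $\Gamma(D,wD)\subseteq\CCCC_{i+1}$. Thus $\CCCC_w$ is itself a subcomplex of the Davis complex $X$, and since $X$ is a locally finite CW complex, any subcomplex is closed in the metric topology.

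For~(3), pick $x\in\CCCC_w$ and let $i\in\NN$ be minimal with $x\in\CCCC_i$. Then $x$ lies in some chamber $C$ of $\CCCC_i$, and by~(1) the chamber $wC$ also belongs to $\CCCC_i$ and clearly contains $wx$. Applying Lemma~\ref{lemma geodesic in gallery} to the points $x\in C$ and $wx\in wC$ produces a minimal gallery from $C$ to $wC$ containing the geodesic segment $[x,wx]$. Since this gallery is contained in $\Gamma(C,wC)\subseteq\CCCC_{i+1}\subseteq\CCCC_w$, the desired inclusion follows.

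The main conceptual obstacle is that~(3) is \emph{not} a convexity assertion for $\CCCC_w$ (which in general need not be convex in $X$), but a statement tied specifically to the geodesic from $x$ to $wx$. The $w$-equivariance of the construction is exactly what allows one to package $x$ and $wx$ into a pair $(C,wC)$ of chambers lying in the same $\CCCC_i$, after which Lemma~\ref{lemma geodesic in gallery} does the remaining work.
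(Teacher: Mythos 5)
Your proposal is correct and follows essentially the same route as the paper: part (3) is exactly the paper's argument (package $x$ and $wx$ into the chamber pair $(C,wC)$ and invoke Lemma~\ref{lemma geodesic in gallery}), and part (1) is the same straightforward induction. For part (2) the paper argues instead that a convergent sequence in $\CCCC_w$ is bounded and hence lies in finitely many chambers of $\CCCC_w$, whose union is closed; this is the same local-finiteness observation underlying your ``subcomplex of a locally finite complex is closed'' phrasing.
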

\begin{proof}
The first statement follows from a straightforward induction on $i$.

To see that $\CCCC_w$ is closed, note that any sequence of points of $\CCCC_w$ that converges in $X$ is bounded, hence contained in finitely many chambers of $\CCCC_w$. As the reunion of these chambers is a closed subset of $\CCCC_w$, the conclusion follows.

We now prove (3). Let $x\in\CCCC_w\subseteq X$. Pick some $i\geq 0$ and some chamber $D\in\CCCC_i$ containing $x$. Then by Lemma~\ref{lemma geodesic in gallery}, the geodesic from $x$ to $wx$ is contained in some minimal gallery from $D$ to $wD$, and hence in $\CCCC_{i+1}\subseteq\CCCC_w$, as desired.
\end{proof}

We say that $u\in W$ is {\bf elementary related} to $v\in W$ if $v$ admits a decomposition that is a cyclic shift of some reduced decomposition of $u$, that is, there is some reduced decomposition $u=s_1\dots s_n$ ($s_i\in S$) of $u$ and some $k\in\{1,\dots,n\}$ such that $v=s_k\dots s_ns_1\dots s_{k-1}$. We say that $u\in W$ is {\bf $\kappa$-related} to $v\in W$, which we denote by $u\sim_{\kappa} v$, if there exists a sequence $u=u_0,u_1,\dots,u_k=v$ for some $k\geq 1$ such that $u_i$ is elementary related to $u_{i+1}$ for all $i=0,\dots,k-1$. Note that if $u\sim_{\kappa} v$, then $v$ is conjugate to $u$ and $\ell(u)\geq\ell(v)$. Note also that the relation $\sim_{\kappa}$ is in general not symmetric, as one might have $u\sim_{\kappa} v$ with $\ell(u)>\ell(v)$. However, within the class of cyclically reduced elements of $W$, $\sim_{\kappa}$ is an equivalence relation. 

\begin{lemma}\label{lemma geom combin CCCC}
Let $D$ be a chamber of $\CCCC_w$, say $D=vC_0$ for some $v\in W$. Then $w\sim_{\kappa} v\inv wv$.
\end{lemma}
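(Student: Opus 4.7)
The plan is to prove, by induction on the smallest index $i$ for which $D\in\CCCC_i$, the following statement: for every chamber $vC_0\in\CCCC_i$, one has $w\sim_{\kappa}v\inv wv$. The base case $i=0$ is immediate, since then $v=w^k$ for some $k\in\ZZ$ and $v\inv wv=w$; the relation $\sim_{\kappa}$ is reflexive (any reduced decomposition of $w$ is a cyclic shift of itself via the index $1$).

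For the inductive step, suppose $D=vC_0\in\CCCC_{i+1}$. By definition of $\CCCC_{i+1}$, there exists $E=uC_0\in\CCCC_i$ such that $D$ lies in $\Gamma(E,wE)$, i.e.\ on some minimal gallery from $uC_0$ to $wuC_0$. Invoking the bijective correspondence between minimal galleries from $uC_0$ to $wuC_0$ and reduced expressions for $u\inv wu$ (recalled in the preliminaries), I can write $u\inv wu=s_1\dots s_n$ reduced and $D=us_1\dots s_k C_0$ for some $0\le k\le n$, so that $v=us_1\dots s_k$. The direct computation
\[
v\inv wv \;=\; s_k\dots s_1\cdot u\inv wu\cdot s_1\dots s_k \;=\; s_{k+1}\dots s_n s_1\dots s_k
\]
exhibits $v\inv wv$ as the result of a cyclic shift applied to the reduced decomposition $s_1\dots s_n$ of $u\inv wu$. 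Hence $u\inv wu\sim_{\kappa}v\inv wv$. Combining with the induction hypothesis $w\sim_{\kappa}u\inv wu$ and the (obvious) transitivity of $\sim_{\kappa}$, obtained by concatenating sequences of elementary relations, yields $w\sim_{\kappa}v\inv wv$, as desired.

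I do not expect any real obstacle: the whole argument amounts to bookkeeping through the gallery/reduced-expression dictionary. The one subtlety to be attentive to is that the induction has to be formulated in terms of the conjugate $v\inv wv$ rather than $w$ itself, because the cyclic shift performed at each inductive step is applied to a reduced expression of $u\inv wu$ (the element whose minimal galleries starting from $uC_0$ account for the new chambers of $\CCCC_{i+1}$) and not of $w$. Once the statement is phrased this way, the inductive step is a single line of computation.
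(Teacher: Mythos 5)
Your proof is correct and follows essentially the same route as the paper's: induction on the index $i$ with $D\in\CCCC_i$, using the gallery/reduced-expression dictionary to write $v=us_1\dots s_k$ for an initial segment of a reduced expression of $u\inv wu$, and observing that $v\inv wv$ is then a cyclic shift of $u\inv wu$. Your version merely makes the one-line computation and the reflexivity/transitivity of $\sim_\kappa$ explicit where the paper leaves them implicit.
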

\begin{proof}
Let $i\in\NN$ be such that $D\in\CCCC_i$. We prove the claim by induction on $i$.

If $i=0$, then $v=w^k$ for some $k\in\ZZ$ and there is nothing to prove.

Assume now that $i>0$. Let $E$ be a chamber of $\CCCC_{i-1}$ such that $D$ is on a minimal gallery $\Gamma$ from $E$ to $wE$, say $E=uC_0$ for some $u\in W$. Hence the type of $\Gamma$ is obtained from a reduced expression of $u\inv wu$. Thus $v=uu_1$ for some $u_1\in W$ appearing as an initial subword in some reduced expression of $u\inv wu$. In particular, $u\inv wu$ is elementary related to $v\inv wv$. As $w\sim_{\kappa} u\inv wu$ by induction hypothesis, the conclusion follows.
\end{proof}

\begin{prop}\label{prop CCCC cap Min}
Let $w\in W$. Then $\CCCC_w$ contains a point of $\Min(w)$. In particular, if $w$ has infinite order, then $\CCCC_w$ contains a $w$-axis.
\end{prop}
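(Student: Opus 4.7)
The plan is to realise a point of $\Min(w)$ as a minimiser of the displacement function over $\CCCC_w$. Consider $f\co X\to[0,\infty)$, $f(x):=\dist(x,wx)$: this is continuous, convex (since $X$ is $\CAT$ and $w$ acts by isometries), and $w$-invariant, with $\min_X f=|w|$ attained exactly on $\Min(w)$. Let $\alpha:=\inf_{x\in\CCCC_w}f(x)\ge|w|$ and pick a minimising sequence $(y_n)\subseteq\CCCC_w$ with $f(y_n)\to\alpha$.

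The first step is to produce a cluster point of $(y_n)$ in $\CCCC_w$; this is the main obstacle, since $(y_n)$ need not be bounded (in the Euclidean case, midpoint-type iterations drift along a $w$-axis by $|w|/2$ per step). Using the $w$-invariance of $f$ and the $w$-stability of $\CCCC_w$, I replace $y_n$ by $\tilde y_n:=w^{-k_n}y_n\in\CCCC_w$ for integers $k_n$ chosen to cancel this drift, for instance by normalising a reference point of $\tilde y_n$ to lie in a bounded fundamental region for the $\la w\ra$-action. Then $f(\tilde y_n)=f(y_n)\to\alpha$, and since $X$ is proper (the Davis complex is locally finite) and $\CCCC_w$ is closed, a subsequence of $(\tilde y_n)$ converges in $\CCCC_w$ to some $y^*$ with $f(y^*)=\alpha$.

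The second step is to show $y^*\in\Min(w)$. Lemma~\ref{lemma basic prop CCCC}(3) ensures that the midpoint $T(y^*)$ of the segment $[y^*,wy^*]$ lies in $\CCCC_w$. Using that $wT(y^*)$ is the midpoint of $[wy^*,w^2y^*]$ and applying the $\CAT$ midsegment inequality to the triangle $(y^*,wy^*,w^2y^*)$, one obtains
\[
f(T(y^*))=\dist(T(y^*),wT(y^*))\le\tfrac12\dist(y^*,w^2y^*)\le\dist(y^*,wy^*)=\alpha.
\]
But $T(y^*)\in\CCCC_w$ forces $f(T(y^*))\ge\alpha$, so equality holds throughout. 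The equality $\dist(y^*,w^2y^*)=2\dist(y^*,wy^*)$ means that the triangle degenerates: $y^*,wy^*,w^2y^*$ are collinear with $wy^*$ the midpoint. Replacing $y^*$ by $w^ky^*$ and iterating, the entire orbit $\{w^ky^*\}_{k\in\ZZ}$ lies on a single geodesic line stabilised by $w$, whence $y^*\in\Min(w)$ and $\alpha=|w|$. The ``in particular'' claim then follows immediately: if $w$ has infinite order, $y^*$ lies on a $w$-axis $L$, and by Lemma~\ref{lemma basic prop CCCC}(3) each segment $[w^ky^*,w^{k+1}y^*]\subseteq L$ lies in $\CCCC_w$; these segments cover $L$ (since $w$ acts on $L$ by translation of length $|w|$), so $L\subseteq\CCCC_w$.
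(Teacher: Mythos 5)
Your second step (showing that a minimiser of the displacement function over $\CCCC_w$ must lie in $\Min(w)$, via the midpoint of $[y^*,wy^*]$ and Lemma~\ref{lemma basic prop CCCC}(3)) is sound and is essentially the paper's argument. The problem is the first step: the existence of the minimiser $y^*$. You correctly identify that a minimising sequence $(y_n)\subseteq\CCCC_w$ need not be bounded, but your fix --- translating by powers of $w$ so that a reference point lands in ``a bounded fundamental region for the $\la w\ra$-action'' --- does not work, because no such bounded region exists: $\la w\ra$ is (virtually) cyclic and does not act cocompactly on the Davis complex $X$ unless $W$ is virtually cyclic, and it is not clear either that it acts cocompactly on $\CCCC_w$ (the union $\CCCC_w=\bigcup_i\CCCC_i$ is built in infinitely many stages and could a priori contain infinitely many $\la w\ra$-orbits of chambers). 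Concretely, the sequence $(y_n)$ could drift off to infinity in a direction transverse to the $w$-translation, and multiplying by $w^{-k_n}$ does nothing to control that. So the convergence of $(\tilde y_n)$ is not justified, and this is exactly the crux of the proposition.

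The paper closes this gap with a finiteness argument that replaces compactness altogether: for each chamber $D=vC_0$ of $\CCCC_w$ one has $w\sim_{\kappa}v\inv wv$ by Lemma~\ref{lemma geom combin CCCC}, hence $\ell(v\inv wv)\le\ell(w)$, so $v\inv wv$ ranges over the finite set of elements of length at most $\ell(w)$; since the minimum of $f_w$ over $D$ equals the minimum of $f_{v\inv wv}$ over $C_0$, the per-chamber minima of $f_w$ take only finitely many values as $D$ ranges over all chambers of $\CCCC_w$, and therefore the infimum over $\CCCC_w$ is attained on some chamber. If you want to rescue your approach you need an input of this combinatorial kind (it is the whole point of having built $\CCCC_w$ out of galleries of type a reduced word for a conjugate of $w$); a purely metric compactness argument will not do it.
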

\begin{proof}
For each $u\in W$, consider the continuous function $f_u\co X\to \RR: x\mapsto\dist(x,ux)$. Let $u_1C_0,\dots,u_kC_0$ ($u_i\in W$) be the chambers at (gallery) distance at most $\ell(w)$ from $C_0$. For $i=1,\dots,k$, let $a_i\in \RR$ be the minimum of the function $f_{u_i}$ over $C_0$.

For each chamber $D$ of $\CCCC_w$, let $x_D$ be a point of $D$ where $f_w$ attains its minimum over $D$. Note then that if $D=vC_0$ for some $v\in W$, the function $f_{v\inv wv}$ attains its minimum over $C_0$ at $v\inv x_D$. Moreover, as $\ell(v\inv wv)\leq \ell(w)$ by Lemma~\ref{lemma geom combin CCCC}, there is some $i\in\{1,\dots,k\}$ such that $v\inv wv=u_i$. Then $f_w(x_D)=\dist(x_D,wx_D)=f_{u_i}(v\inv x_D)=a_i$. We have thus shown that the set $\{f_w(x_D) \ | \ \textrm{$D$ is a chamber of $\CCCC_w$}\}$ is contained in $\{a_i \ | \ i=1,\dots,k\}$ and is therefore finite. In particular, $f_w$ attains its minimum over $\CCCC_w$, say at $x\in\CCCC_w$.

We claim that $\dist(w\inv x,wx)=2\dist(x,wx)$, and hence that $x\in \Min(w)$ (see e.g. \cite[Chapter~II, Proposition~1.4~(2)]{BHCAT0}). Indeed, suppose for a contradiction that $\dist(w\inv x,wx)<2\dist(x,wx)$. Let $y$ be the midpoint of the geodesic from $w\inv x$ to $x$. Then $f_w(y)=\dist(y,wy)<\dist(y,x)+\dist(x,wy)=\dist(x,wx)=f_w(x)$. As $w\inv x$ and hence $y$ belong to $\CCCC_w$ by Lemma~\ref{lemma basic prop CCCC}~(1) and (3), we get the desired contradiction.

The second statement follows from the fact that, if $w$ has infinite order, then any $w$-axis intersecting $\CCCC_w$ is entirely contained in $\CCCC_w$ by Lemma~\ref{lemma basic prop CCCC}~(1) and (3).
\end{proof}

\begin{corollary}\label{corollary sim to C0}
Let $u\in W$ be of infinite order. Then there is some $w\in W$ with $u\sim_{\kappa} w$ such that $\Min(w)\cap C_0$ is nonempty and not contained in any $w$-essential wall.
\end{corollary}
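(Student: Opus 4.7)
The plan is to combine Proposition~\ref{prop CCCC cap Min}, which supplies a $u$-axis $L\subseteq\CCCC_u$, with Lemma~\ref{lemma geom combin CCCC}, so as to move $L$ into $C_0$. Concretely, I would locate a chamber $D=vC_0$ of $\CCCC_u$ meeting $L$ in a segment of positive length, choose a suitable point $x\in L\cap D$, and then set $w:=v\inv u v$ and $y:=v\inv x$. Lemma~\ref{lemma geom combin CCCC} will give $u\sim_{\kappa} w$, and $v\inv L$ will be a $w$-axis through $y$, so $y\in\Min(w)\cap C_0$, taking care of the nonemptiness assertion.

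First I would produce the positive-length segment in $D$: the quotient $L/\la u\ra$ is compact since $u$ acts on $L$ by translation of amplitude $|u|>0$, and $\CCCC_u$ is a closed union of chambers on which $\la u\ra$ acts with only finitely many orbits of chambers meeting $L$. Hence at least one such chamber $D$ must meet $L$ in a nondegenerate segment.

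Having fixed $D$, only finitely many walls of $X$ meet $D$, and each of them either contains $L$ entirely or meets $L$ in at most a single point (because $L$ is a geodesic line and walls are closed convex subsets, so any intersection of the line with a wall containing more than one point forces the whole line to lie in the wall). I then pick $x\in L\cap D$ avoiding the finitely many transverse crossings of $L$ with such walls. After translating back to $C_0$, this ensures that every wall $m$ of $X$ containing $y=v\inv x$ in fact contains the whole $w$-axis $v\inv L$.

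The main obstacle, and the final step, is to conclude that no wall $m$ containing $v\inv L$ can be $w$-essential. Here I would invoke the flat strip decomposition $\Min(w)\simeq Y\times\RR$ (Bridson--Haefliger), so that every $w$-axis is parallel to $v\inv L$: given another $w$-axis $L''$, its parallelism with $v\inv L\subseteq m$ forces $L''$ either to lie in $m$ or to be entirely disjoint from $m$, so $m$ is transverse to no $w$-axis and hence is not $w$-essential. Combining everything, $y$ lies in $\Min(w)\cap C_0$ and on no $w$-essential wall, yielding both halves of the corollary.
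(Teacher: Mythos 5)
Your proof is correct and takes essentially the same route as the paper: Proposition~\ref{prop CCCC cap Min} produces a $u$-axis inside $\CCCC_u$, you pick a point of it lying in a chamber $vC_0$ of $\CCCC_u$ and on no $u$-essential wall, and Lemma~\ref{lemma geom combin CCCC} gives $u\sim_{\kappa} v\inv uv$. The only difference is that you spell out steps the paper leaves implicit --- in particular the flat-strip argument showing that a convex wall containing one $w$-axis cannot be transverse to any other (parallel) $w$-axis and hence is not $w$-essential, which is exactly what justifies the existence of the desired point.
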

\begin{proof}
By Proposition~\ref{prop CCCC cap Min}, we know that $\CCCC_u$ contains a $u$-axis. Let $x$ be a point on that axis that is not contained in any $u$-essential wall. Let $D$ be a chamber of $\CCCC_u$ containing $x$, say $D=vC_0$ for some $v\in W$. Then Lemma~\ref{lemma geom combin CCCC} implies that $u\sim_{\kappa} v\inv uv$. The conclusion follows with $w=v\inv u v$.
\end{proof}


Here is a restatement of Theorem~\ref{thmintro Matsumoto}.
\begin{theorem}\label{theorem matsumoto}
Let $w\in W$ be cyclically reduced and of infinite order. Assume that $w$ has property $\Cent$. Then any cyclically reduced conjugate of $w$ is $\kappa$-related to $w$.
\end{theorem}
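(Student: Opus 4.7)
The plan is to reduce, via Corollary~\ref{corollary sim to C0}, to the case where both sides admit axes with controlled behaviour inside the fundamental chamber $C_0$. Applied to $w$ and to an arbitrary cyclically reduced conjugate $w'$ of $w$, the corollary produces elements $v \sim_{\kappa} w$ and $v' \sim_{\kappa} w'$, each admitting an axis meeting $C_0$ at a point that lies on no essential wall of the respective element. Since $\sim_{\kappa}$ is an equivalence relation on cyclically reduced elements, it suffices to prove $v \sim_{\kappa} v'$; I therefore assume from the start that $w$ and $w'$ themselves satisfy these hypotheses, with axes $L, L'$ meeting $C_0$ at points $x$ and $x'$.

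The next step is to read off a canonical cyclic word from each axis. By Lemma~\ref{lemma geodesic in gallery}, the geodesic segment from $x$ to $wx$ is contained in some minimal gallery from $C_0$ to $wC_0$, whose type is a reduced expression for $w$. Replacing the starting point $x$ by $w^{k}x$ produces all cyclic shifts of this expression, while alternative minimal galleries carrying the same geodesic segment differ by braid relations. The axis $L$ thus determines a well-defined cyclic word for $w$ up to $\sim_{\kappa}$-equivalence, and similarly $L'$ does for $w'$; the goal becomes to show that these two cyclic words coincide modulo $\sim_{\kappa}$.

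To match them, I would exploit the equation $w' = gwg^{-1}$: the translate $gL$ is a $w'$-axis. By Proposition~\ref{prop CCCC cap Min}, $\CCCC_{w'}$ already contains a $w'$-axis through $C_0$, and its inductive construction from minimal galleries between consecutive $w'$-translates of $C_0$ allows one, in combination with Lemma~\ref{lemma geom combin CCCC}, to connect the chamber data seen along $gL$ to that seen along $L'$ by a sequence of elementary relations. This realises the cyclic word of $w$ (read off $L$ and transported by $g$) as $\kappa$-equivalent to that of $w'$ (read off $L'$), yielding $w \sim_{\kappa} w'$.

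The main obstacle is the possibility that the axis traverses a face of codimension $\ge 2$: at such a face several walls are crossed simultaneously, and the order of the corresponding generators in the associated reduced expression is ambiguous, since different minimal galleries realise different orderings. The stabiliser of such a face is a finite parabolic subgroup $W_I$, and because $w$ preserves the axis setwise while the face is crossed transversally, a suitable $W$-conjugate of $w$ normalises $W_I$. Property~$\Cent$ is invoked precisely here: it forces this conjugate to centralise $W_I$, so the generators of $I$ commute appropriately with the rest of the word and the ambiguous orderings are reconciled by braid relations absorbed into $\sim_{\kappa}$. This is the decisive step, without which the matching described above would fail.
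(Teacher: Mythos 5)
Your reduction via Corollary~\ref{corollary sim to C0} matches the paper's first step, but the heart of your argument --- ``connect the chamber data seen along $gL$ to that seen along $L'$ by a sequence of elementary relations'' --- is precisely the content of the theorem and is asserted rather than proved. The two $w'$-axes $gL$ and $L'$ may be far apart, and there is no reason a priori that the chambers met by $gL$ belong to $\CCCC_{w'}$; Lemma~\ref{lemma geom combin CCCC} only applies to chambers already known to lie in $\CCCC_{w'}$. The paper closes exactly this gap as follows: choose the conjugator $v$ with $u=v\inv wv$ of \emph{minimal length}, join $x_w$ to $vx_u$ by a geodesic (which lies in $\Min(w)$ by convexity), embed it in a minimal gallery from $C_0$ to $vC_0$ via Lemma~\ref{lemma geodesic in gallery}, and walk along that gallery. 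At the first wall $m$ where the gallery would leave $\CCCC_w$, one considers the $w$-axis $L$ through a point $y\in m\cap\Min(w)$ on the geodesic: if $L$ is transverse to $m$, the next chamber lies in $\CCCC_w$ after all, a contradiction; otherwise $L\subseteq m$, property $\Cent$ forces $w$ to commute with $r_m$, and $r_mv$ is a strictly shorter conjugator, contradicting minimality. None of these ingredients (minimal conjugator, convexity of $\Min(w)$, the transverse-versus-containing dichotomy) appears in your proposal, and without them the matching step does not go through.

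Relatedly, you misplace the role of property $\Cent$. The ambiguity in the ordering of generators when a minimal gallery crosses several walls at a face of codimension $\ge 2$ is resolved by Tits' theorem alone (any two minimal galleries between the same pair of chambers have types differing by braid relations), so no hypothesis on $w$ is needed there; moreover, if the axis crosses such a face transversally, then $w$ translates that face away and does \emph{not} normalise its stabiliser. The genuine obstruction arises when a wall \emph{contains} a $w$-axis: such a wall can separate $C_0$ from $vC_0$ while never being crossed by any gallery contributing to $\CCCC_w$, and it is exactly here that $\Cent$ is invoked --- the reflections whose walls contain $L$ generate a finite parabolic normalised by $w$, so $w$ centralises it, letting one commute $r_m$ past $w$ and shorten the conjugator.
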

\begin{proof}
Let $u$ be a cyclically reduced conjugate of $w$. By Corollary~\ref{corollary sim to C0}, there is no loss of generality in assuming that $w$ and $u$ both possess an axis through $C_0$, say $x_w\in\Min(w)\cap C_0$ and $x_u\in\Min(u)\cap C_0$. Choose $v\in W$ of minimal length so that $u=v\inv wv$. Note then that $vx_u\in\Min(w)$. We claim that $vC_0$ is a chamber of $\CCCC_w$, which would imply the theorem by Lemma~\ref{lemma geom combin CCCC}.

Indeed, assume for a contradiction that $vC_0$ is not a chamber of $\CCCC_w$, and let $C_0,C_1,\dots,C_k=vC_0$ ($k\geq 1$) be a minimal gallery from $C_0$ to $vC_0$ containing the geodesic segment $[x_w,vx_u]$ between $x_w$ and $vx_u$ (such a gallery exists by Lemma~\ref{lemma geodesic in gallery}). Let $i\in\{0,\dots k-1\}$ be such that $D:=C_i$ is a chamber of $\CCCC_w$ but $E:=C_{i+1}$ is not. Let $m$ be the wall separating $D$ from $E$. Then $m$ intersects the geodesic $[x_w,vx_u]\subseteq\Min(w)$. Let $y\in (m\cap D)\cap [x_w,vx_u]\subseteq (m\cap D)\cap\Min(w)$, and let $L$ be the $w$-axis through $y$. If $L$ is transverse to $m$, then for some $\epsilon\in\{\pm 1\}$, the chambers $D$ and $w^{\epsilon}D$ lie on different sides of $m$, and hence there is a minimal gallery from $D$ to $w^{\epsilon}D$ passing through $E$, contradicting the fact that $E$ is not a chamber of $\CCCC_w$.
Thus $L$ is containd in $m$. As by assumption $w$ centralises the finite parabolic subgroup generated by the reflections whose wall contain $L$, we deduce that $w$ commutes with $r_m$. In particular, $u=(r_mv)\inv w(r_mv)$. Since $\ell(r_mv)<\ell(v)$, this contradicts the minimality assumption on $v$, as desired.
\end{proof}

\begin{remark}\label{remark precision Cent}
Note that Remark~\ref{remarkintro 1.2} from the introduction readily follows from the proof of Theorem~\ref{theorem matsumoto}. Indeed, keeping the notations of the above proof, let $v_1\in W$ be such that $D=v_1C_0$, so that $w\sim_K v_1\inv wv_1$ by Lemma~\ref{lemma geom combin CCCC}. Let also $A=v_1W_I$ be the support of $y$, for some spherical subset $I$ of $S$. Assume as in the proof that the $w$-axis $L$ through $y$ is contained in $m$. Let $P$ be the spherical parabolic subgroup which is the parabolic closure of the set of reflections whose wall contains $L$. Thus $w$ normalises $P$ and $P\leq v_1W_Iv_1\inv$. Hence $v_1\inv wv_1$ normalises $v_1\inv Pv_1\leq W_I$ and $v_1\inv Pv_1$ is of the form $w_IW_Jw_I\inv$ for some $w_I\in W_I$ and some subset $J\subseteq I$. The conclusion follows.
\end{remark}

\noindent
{\bf Proof of Corollary~\ref{corollary conjugacy classes}.}
Assume that $u_1$ and $u_2$ are conjugate. For $i=1,2$, let $w_i\in W$ be cyclically reduced and such that $u_i\sim_{\kappa} w_i$. It then follows from Theorem~\ref{theorem matsumoto} that $w_1\sim_{\kappa} w_2$. The conclusion follows.
\hspace{\fill}$\Box$

\medskip

We conclude this section by proving Corollary~\ref{corintro equiv cycl red}. To facilitate the exposition, we will call an element $u\in W$ \emph{strongly cyclically reduced in $H<W$} ({\bf SCR in $H$} for short) if $\ell(u)=\min\{\ell(v\inv uv) \ | \ v\in H\}$. Thus $u\in W$ is of minimal length in its conjugacy class if and only if it is SCR in $W$.

\begin{lemma}\label{lemma cyclically reduced inside Pc}
Let $w\in W$ be such that $\Pc(w)$ is standard. Then there exists some $v\in\Pc(w)$ such that $vwv\inv$ is SCR in $W$.
\end{lemma}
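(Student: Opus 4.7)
The plan is to reduce the minimisation problem defining SCR from the whole group $W$ to the parabolic subgroup $W_T:=\Pc(w)$, by means of the standard length-additive factorisation of $W$ along $W_T$.

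Writing $\Pc(w)=W_T$ for some $T\subseteq S$, I would first pick $v\in W_T$ minimising $\ell(v'wv'\inv)$ as $v'$ ranges over $W_T$, and set $w':=vwv\inv\in W_T$. Since $v\in W_T=\Pc(w)$, the lemma will follow provided that $w'$ is SCR in $W$, that is, $\ell(ywy\inv)\geq\ell(w')$ for every $y\in W$.

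To prove this, I would appeal to the classical decomposition $y=xv'$ of each $y\in W$, where $v'\in W_T$ and $x$ is the unique minimal length representative of the coset $yW_T$, the latter being characterised by $\ell(xs)=\ell(x)+1$ for all $s\in T$. A standard induction then yields the length-additivity $\ell(xu)=\ell(x)+\ell(u)$ for every $u\in W_T$. Setting $w'':=v'wv'\inv\in W_T$, so that $ywy\inv=xw''x\inv$, and applying the elementary inequality $\ell(ab\inv)\geq\ell(a)-\ell(b)$ with $a=xw''$ and $b=x$, one obtains $\ell(ywy\inv)\geq\ell(xw'')-\ell(x)=\ell(w'')$. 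The minimality of $v$ inside $W_T$ then forces $\ell(w'')\geq\ell(w')$, as desired.

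There should be no serious obstacle in this argument: it rests only on classical length properties of parabolic subgroups and the triangle inequality for word length. The standardness assumption on $\Pc(w)$ is invoked only to ensure that the minimiser $v$, which a priori lies in $W_T$, actually belongs to $\Pc(w)$ as the statement demands.
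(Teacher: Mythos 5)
Your argument is correct, and it is genuinely different from --- and more elementary than --- the one in the paper. You minimise $\ell(v'wv'\inv)$ directly over $v'\in W_T=\Pc(w)$ and then show this already achieves the global minimum over $W$, using only the length-additive decomposition $y=xv'$ along minimal coset representatives of $yW_T$ together with the triangle inequality $\ell(xw''x\inv)\geq\ell(xw'')-\ell(x)=\ell(w'')$; every ingredient is a standard fact about parabolic subgroups. The paper instead starts from an arbitrary global minimiser $v_1$ (so that $v_1wv_1\inv$ is SCR in $W$) and transports it into $W_T$: it invokes the nontrivial fact that the parabolic closure of an element of minimal length in its conjugacy class is standard, writes $v_1W_Tv_1\inv=W_U$, and then uses Deodhar's description of conjugate standard parabolic subgroups (via Krammer) to produce an element $x$ with $xTx\inv=U$, so that conjugation by $x$ is a length-preserving isomorphism $(W_T,T)\to(W_U,U)$ and the desired $v\in W_T$ can be read off. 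Your route avoids both of these cited results and is self-contained; the paper's route buys the extra structural information relating the SCR conjugates of $w$ to the conjugates of the standard parabolic $W_T$, but for the statement of the lemma itself your shorter argument suffices. One small presentational point: the standardness of $\Pc(w)$ is not merely needed to place $v$ in $\Pc(w)$ at the end --- it is what allows you to write $\Pc(w)=W_T$ with $T\subseteq S$ in the first place, which is the hypothesis your coset decomposition requires.
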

\begin{proof}
Let $T\subseteq S$ be such that $\Pc(w)=W_T$. Let also $v_1\in W$ be such that $v_1wv_1\inv$ is SCR in $W$. In particular, $\Pc(v_1wv_1\inv)$ is standard (see e.g. \cite[Proposition~4.2]{MR2585575}) and hence $v_1W_Tv_1\inv=\Pc(v_1wv_1\inv)=W_U$ for some $U\subseteq S$. It then follows from \cite[Proposition~3.1.6]{Krammer} that there is some $u\in W_U$ such that $v_1 T v_1\inv =uUu\inv$. Set $x=u\inv v_1$, so that $x\inv Ux=T$. Set also $v=x\inv ux\in W_T$. Note that $u(xwx\inv)u\inv\in W_U$ and hence $vwv\inv=x\inv u(xwx\inv)u\inv x$ has length $\ell(u(xwx\inv)u\inv)=\ell(v_1wv_1\inv)$. Thus $vwv\inv$ is SCR in $W$, as desired.
\end{proof}

\noindent
{\bf Proof of Corollary~\ref{corintro equiv cycl red}.}
Clearly, if $w$ is SCR in $W$, then it is cyclically reduced. Conversely, assume that $w$ is cyclically reduced and let us prove that it is SCR in $W$. If $w$ has property $\Cent$, this readily follows from Theorem~\ref{thmintro Matsumoto}, and we may thus assume that $w$ has finite order. We will prove that there is some $w'\in W$ with $w\sim_\kappa w'$ and such that $w'$ is SCR in $W$, yielding the claim. 

By Lemma~\ref{lemma geom combin CCCC} together with Proposition~\ref{prop CCCC cap Min}, one can find some $w_1\in W$ with $w\sim_\kappa w_1$ such that $w_1$ fixes a face of $C_0$, that is, $w_1\in W_I$ for some spherical subset $I$ of $S$.
It then follows from \cite[Theorem~2.6]{GeckKimPfeiffer} that there is some $w'\in W_I$ which is SCR in $W_I$ and such that $w_1\sim_\kappa w'$. In particular, it follows from \cite[Proposition~4.2]{MR2585575} that the parabolic closure of $w'$ in $W_I$ (or equivalently, in $W$) is standard. We then deduce from Lemma~\ref{lemma cyclically reduced inside Pc} that there is some $v\in \Pc(w')\leq W_I$ such that $v\inv wv$ is SCR in $W$. But as $w'$ is SCR in $W_I$, we know that $\ell(w')=\ell(v\inv wv)$, and hence $w'$ is in fact SCR in $W$, as desired.
\hspace{\fill}$\Box$

\section{Straight elements in Coxeter groups}\label{section proof of thm}
This section is devoted to the proof of Theorem~\ref{mainthm intro} and Corollaries~\ref{corintro CFC} and \ref{corintro Coxeter element}. Throughout this section, we fix a Coxeter system $(W,S)$.

\begin{lemma}\label{lemma log implies}
Let $w\in W$ be straight. Then $w$ is cyclically reduced (more precisely, $\ell(w)=\min\{\ell(v wv\inv) \ | \ v\in  W\}$) and torsion-free.
\end{lemma}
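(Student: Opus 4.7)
The plan is to treat the two conclusions separately, since neither relies on the other and both reduce to elementary length estimates combined with the definition of straightness.

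For the cyclic reducedness statement in its strong form, I would introduce the stable length $|w|_\infty:=\lim_{n\to\infty}\ell(w^n)/n$, which exists since $\ell(w^{m+n})\leq\ell(w^m)+\ell(w^n)$ is subadditive. Three standard observations then suffice: (i) $|u|_\infty\leq\ell(u)$ for every $u\in W$, again by subadditivity; (ii) $|\cdot|_\infty$ is conjugation invariant, because $|\ell(vw^nv\inv)-\ell(w^n)|\leq 2\ell(v)$ and this fixed constant becomes negligible after division by $n$; and (iii) if $w$ is straight then $|w|_\infty=\ell(w)$ by definition. Applying these to an arbitrary conjugate $u=vwv\inv$ of the straight element $w$ yields
\[ \ell(w)\,=\,|w|_\infty\,=\,|u|_\infty\,\leq\,\ell(u), \]
so $\ell(w)=\min\{\ell(vwv\inv) \mid v\in W\}$. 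Since being of minimal length in a conjugacy class is strictly stronger than being cyclically reduced, both assertions in the first part follow at once.

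For torsion-freeness, I would argue by contradiction: assume $w=w_In_I$ is a reduced decomposition with $I\subseteq S$ spherical, $w_I\in W_I\setminus\{1\}$ and $n_I$ normalising $W_I$. Since $n_I$ normalises $W_I$, each element $u_k:=n_I^kw_In_I^{-k}$ lies in $W_I$, and sweeping all copies of $n_I$ to the right in $(w_In_I)^n$ gives the identity $w^n=u_0u_1\cdots u_{n-1}\cdot n_I^n$. The product $u_0u_1\cdots u_{n-1}$ lies in the \emph{finite} group $W_I$ (finite since $I$ is spherical), so its length is bounded uniformly in $n$ by $\ell(w_0^I)$, the length of the longest element of $W_I$. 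Combining this with the subadditive bound $\ell(n_I^n)\leq n\ell(n_I)$ yields
\[ \ell(w^n)\,\leq\,\ell(w_0^I)+n\ell(n_I). \]
On the other hand, straightness together with the reduced decomposition $w=w_In_I$ forces $\ell(w^n)=n\ell(w)=n\ell(w_I)+n\ell(n_I)$, so we deduce $n\ell(w_I)\leq\ell(w_0^I)$ for every $n\in\NN$. Since $\ell(w_I)\geq 1$ and $\ell(w_0^I)$ is a constant, this is absurd for $n$ large enough.

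The only mildly delicate point is the torsion-freeness half: one has to identify the correct bounded piece in the expansion of $w^n$ and observe that its length is controlled \emph{uniformly} in $n$ thanks to the finiteness of $W_I$. Beyond that, no appeal to Lemma~\ref{lemma normaliser} or to any $\CAT$ geometry is actually required — pure subadditivity of $\ell$ together with the existence of a longest element in a finite parabolic does all the work.
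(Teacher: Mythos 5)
Your proof is correct and follows essentially the same route as the paper: the cyclic-reducedness half is the paper's computation (comparing $n\ell(w)$ against $\ell(vw^nv\inv)$ up to the additive error $2\ell(v)$) repackaged in the language of stable length, and the torsion-freeness half uses the same normal form $w^n=(\text{element of }W_I)\cdot n_I^n$ and the same exploitation of the finiteness of $W_I$. The only cosmetic difference is that where you bound the $W_I$-part uniformly by $\ell(w_0^I)$ and derive a linear-versus-constant contradiction, the paper applies pigeonhole to find $k<l$ with $w^{l-k}=n_I^{l-k}$ and contradicts $\ell(n_I)<\ell(w)$; both are equivalent uses of the same finiteness.
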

\begin{proof}
Let $w\in W$ be straight. Assume first for a contradiction that it is not cyclically reduced: $\ell(w)\geq\ell(vwv\inv)+1$ for some $v\in W$. Then for all $n\in\NN$, 
$$\ell(vw^nv\inv)+2\ell(v)\geq \ell(w^n)=n\ell(w)\geq n\ell(vwv\inv)+n\geq \ell(vw^nv\inv)+n,$$ a contradiction. 

Assume next for a contradiction that $w$ is not torsion-free: there is a reduced decomposition $w=w_In_I$ for some spherical subset $I\subseteq S$, some $w_I\in W_I\setminus\{1\}$ and some $n_I$ normalising $W_I$. Then for each $k\in\NN$ there is some $w_k\in W_I$ such that $w^k=w_kn_I^k$. As $W_I$ is finite, $w_k=w_l$ for some $k< l$ and hence $w^K=n_I^K$ for $K=l-k\in\NN^*$. Thus $K\ell(w)=\ell(w^K)=\ell(n_I^K)\leq K\ell(n_I)<K\ell(w)$, a contradiction. 
\end{proof}


\begin{lemma}\label{lemma log iff conj log}
Let $v,w\in W$ be such that $\ell(w)=\ell(vwv\inv)$. Then $w$ is straight if and only if $vwv\inv$ is straight.
\end{lemma}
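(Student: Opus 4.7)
The plan is to exploit the fact that conjugation by a fixed element $v$ distorts word length only by a bounded amount, while straightness is an asymptotic (linear growth) property, so a bounded distortion cannot create a non-straight element from a straight one provided we also use the hypothesis $\ell(w)=\ell(vwv\inv)$.

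First, I would set $w'=vwv\inv$, and observe that for every $n\in\NN$ one has $(w')^n=vw^nv\inv$, so by subadditivity of $\ell$ (i.e.\ $\ell(ab)\le\ell(a)+\ell(b)$) we get the two-sided estimate
\[
|\ell(w^n)-\ell((w')^n)|\le 2\ell(v).
\]
Since the situation is symmetric in $w$ and $w'$ (as $w=v\inv w' v$ and $\ell(w)=\ell(w')$ by hypothesis), it suffices to prove one direction: assuming $w$ is straight, I want to show $w'$ is straight.

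Now I would argue by contradiction. Suppose $w'$ is not straight. By submultiplicativity $\ell((w')^n)\le n\ell(w')$ for all $n$, so there exists some $m\in\NN$ with $\ell((w')^m)\le m\ell(w')-1$. Iterating this on powers of $m$ gives, for every $k\in\NN$,
\[
\ell((w')^{km})\le k\ell((w')^m)\le km\ell(w')-k.
\]
On the other hand, using the estimate above together with the straightness of $w$ and the equality $\ell(w)=\ell(w')$,
\[
\ell((w')^{km})\ge \ell(w^{km})-2\ell(v)=km\ell(w)-2\ell(v)=km\ell(w')-2\ell(v).
\]
Comparing the two yields $k\le 2\ell(v)$ for all $k\in\NN$, a contradiction. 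Hence $\ell((w')^n)=n\ell(w')$ for every $n$, so $w'$ is straight, and the converse implication follows by symmetry.

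There is essentially no obstacle here: the whole proof fits in a few lines, and the only subtlety is to remember that straightness is stated for all $n\in\NN$ but the failure at a single $m$ is enough to generate an unbounded defect via iteration, which is then incompatible with the bounded distortion under conjugation. The equality $\ell(w)=\ell(vwv\inv)$ is crucial—without it, the linear main terms $n\ell(w)$ and $n\ell(w')$ would differ, dwarfing the constant $2\ell(v)$ and breaking the contradiction.
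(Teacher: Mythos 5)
Your proof is correct and follows essentially the same route as the paper: assume a defect $\ell((vwv^{-1})^m)\le m\ell(vwv^{-1})-1$ at a single power, iterate it via subadditivity to get a linearly growing defect, and contradict the bounded distortion $|\ell(w^n)-\ell((vwv^{-1})^n)|\le 2\ell(v)$ coming from straightness of $w$ and the hypothesis $\ell(w)=\ell(vwv^{-1})$. The only cosmetic difference is that you make the symmetry reduction explicit, whereas the paper leaves the converse direction implicit.
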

\begin{proof}
Suppose that $w$ is straight and assume for a contradiction that $vwv\inv$ is not, that is, there exists some $K\in\NN$ such that $\ell((vwv\inv)^K)<K\ell(vwv\inv)$. As $\ell(w)=\ell(vwv\inv)$, it follows that $K\ell(w)-1\geq \ell(vw^Kv\inv)$. Hence $$n(K\ell(w)-1)\geq n\ell(vw^Kv\inv)\geq\ell(vw^{Kn}v\inv)\geq \ell(w^{Kn})-2\ell(v)=Kn\ell(w)-2\ell(v)$$ for all $n\in\NN$, yielding the desired contradiction for $n>2\ell(v)$.
\end{proof}

\begin{lemma}\label{lemma log if ess}
Let $w$ be an infinite order element of $W$ possessing an axis through $C_0$ (that is, such that $\Min(w)\cap C_0\neq\emptyset$). If the walls separating $C_0$ from $wC_0$ are all $w$-essential, then $w$ is straight.
\end{lemma}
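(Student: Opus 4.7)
The plan is to prove the upper bound $\ell(w^n)\leq n\ell(w)$ by subadditivity (trivial), and the reverse inequality by constructing an explicit minimal gallery of length $n\ell(w)$ from $C_0$ to $w^nC_0$, obtained by concatenating translates of a minimal gallery $\Gamma$ from $C_0$ to $wC_0$, and then verifying minimality via a parameter analysis along a $w$-axis.

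Concretely, I will fix $x\in\Min(w)\cap C_0$ and let $L$ be the $w$-axis through $x$, parametrized so that $L(0)=x$ and $L(k|w|)=w^kx$. Letting $M$ denote the set of walls crossed by $\Gamma$ (equivalently, the walls separating $C_0$ from $wC_0$), the concatenation $\Gamma\cdot w\Gamma\cdots w^{n-1}\Gamma$ is a gallery from $C_0$ to $w^nC_0$ of length $n\ell(w)$ crossing the walls $\bigcup_{k=0}^{n-1}w^kM$ (with multiplicity). Since a gallery is minimal precisely when no wall is crossed twice, it suffices to show $M\cap w^kM=\emptyset$ for every $k\geq 1$.

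The heart of the argument is the claim that every $m\in M$ meets $L$ transversally at a unique point $L(t_m)$ with $t_m\in[0,|w|]$. The crucial step is to exclude $L\subseteq m$: if $L\subseteq m$, the $w$-essentiality of $m$ provides another $w$-axis $L'$ transverse to $m$ at some point $p$. By the flat-strip theorem, $\operatorname{conv}(L\cup L')$ is isometric to a Euclidean strip $\RR\times[0,d(L,L')]$, and the closed convex hull of $L\cup\{p\}$ inside this strip is readily seen to be the whole strip. Closedness and convexity of $m$ in $X$ then force $L'\subseteq m$, contradicting $|L'\cap m|=1$. Hence $L\not\subseteq m$, and \cite[Lemma~3.4]{singlepoint} gives $|L\cap m|\leq 1$. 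Since $m$ separates $x\in C_0$ from $wx\in wC_0$ while $[x,wx]\subseteq L$, this intersection is a single point $L(t_m)$ with $t_m\in[0,|w|]$.

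Granting the claim, the wall $w^km$ meets $L$ at $L(t_m+k|w|)$. For $w^km\in M$ one would need $t_m+k|w|\in[0,|w|]$, which combined with $t_m\in[0,|w|]$ forces $k\in\{-1,0,1\}$ and, for $k=\pm1$, forces $t_m\in\{0,|w|\}$. These degenerate possibilities (where $m$ passes through $x$ or through $wx$) are disposed of by a direct side-analysis of $C_0$ and $wC_0$ relative to $wm$ or $w^{-1}m$: for instance, if $t_m=0$, then $L\cap C_0$ lies on the side of $m$ corresponding to $L$-parameters $\leq 0$, and $w$-equivariance places both $C_0$ and $wC_0$ on the side of $wm$ corresponding to $L$-parameters $\leq|w|$, so $wm$ does not separate them and $wm\notin M$; the case $t_m=|w|$ is symmetric. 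Thus $M\cap w^kM=\emptyset$ for all $k\neq 0$, the concatenated gallery is minimal, and $\ell(w^n)\geq n\ell(w)$ follows. I expect the flat-strip step ruling out $L\subseteq m$ to be the main technical obstacle, as it is the only step where the $w$-essentiality hypothesis is leveraged in a non-trivial way.
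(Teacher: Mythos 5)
Your proof is correct and rests on the same geometric mechanism as the paper's: a $w$-essential wall separating $C_0$ from $wC_0$ meets the axis $L$ in exactly one point of $[x,wx]$, so its $w^k$-translates for $k\neq 0$ cannot again separate $C_0$ from $wC_0$, whence the concatenated gallery is minimal. The paper runs this as a short contradiction argument (a wall crossed twice by the bi-infinite gallery meets the axis twice, hence contains it, contradicting essentiality), while you argue directly and in addition spell out the flat-strip step and the endpoint cases $t_m\in\{0,|w|\}$ that the paper leaves implicit.
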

\begin{proof}
Let $D$ be a $w$-axis through $C_0$. Assume for a contradiction that $w$ is not straight. Then there exists a wall $m$ separating $C_0$ from $wC_0$ and $w^nC_0$ from $w^{n+1}C_0$ for some nonzero $n\in\NN$. In particular, $m$ intersects $D$ in at least two points, hence contains $D$. But this contradicts the assumption that $m$ is $w$-essential.
\end{proof}

\begin{remark}\label{remark int C0}
Note that if $w$ is an infinite order element of $W$ possessing an axis through the interior $\interieur(C_0)$ of $C_0$, then it is straight as it satisfies the hypotheses of Lemma~\ref{lemma log if ess}: indeed, in that case, the walls separating $C_0$ from $wC_0$ are also the walls separating a point $x\in\Min(w)\cap \interieur(C_0)$ from $wx$ and such an $x$ is not contained in any wall.
\end{remark}

\begin{lemma}\label{lemma straight iff tf}
Let $w\in W$ be of infinite order. Assume that $\Min(w)\cap C_0$ is nonempty and not contained in any $w$-essential wall. Then $w$ is straight if and only if it is torsion-free.
\end{lemma}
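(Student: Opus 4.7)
The forward direction is immediate from Lemma~\ref{lemma log implies}, since any straight element is torsion-free.

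For the converse, I would assume $w$ is torsion-free and prove that $w$ is straight by contradiction. The strategy is to apply the contrapositive of Lemma~\ref{lemma log if ess}: if $w$ were not straight, some wall $m$ separating $C_0$ from $wC_0$ would fail to be $w$-essential, and from $m$ I aim to extract a reduced decomposition of $w$ violating torsion-freeness.

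Choose $x\in\Min(w)\cap C_0$ not lying on any $w$-essential wall (possible by hypothesis), and let $L$ be a $w$-axis through $x$. Suppose $m$ is a non-$w$-essential wall separating $C_0$ from $wC_0$. Then $m$ meets $[x,wx]\subseteq L$, and failing to be transverse to $L$, it must contain $L$; in particular $r_m$ fixes $L$ pointwise.

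Set $I:=\{s\in S:sx=x\}$, which is spherical since $W_I$ is the finite stabiliser of $x$. The key observation is that $W_I$ is exactly the pointwise stabiliser of $L$, so that $W_I$ fixes every point of $L$. Indeed, any wall through $x$ either meets $L$ transversally at $x$---hence is $w$-essential, contradicting the choice of $x$---or contains $L$. Since $W_I$ is generated by the reflections in walls through $x$, each such generator fixes $L$ pointwise, and so does $W_I$.

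Now consider the canonical decomposition $w=w_In_I$ with $w_I\in W_I$, $n_I$ the minimal-length representative of the coset $W_Iw$, and $\ell(w)=\ell(w_I)+\ell(n_I)$. Using that $w_I$ permutes the positive roots outside $\Phi_I$ and that $n_I^{-1}$ sends positive roots of $\Phi_I$ to positive roots of $\Phi$, the inversions of $w$ lying in $\Phi_I$ coincide with the inversions of $w_I$; as $m$ provides such an inversion (with $r_m\in W_I$), we get $w_I\neq 1$.

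For the normalising property, from $n_I^{-1}=w^{-1}w_I$ and $w_Ix=x$ we obtain $n_I^{-1}x=w^{-1}x$, which lies on $L$ and is therefore fixed by $W_I$. Hence for every $s\in I$, $n_Isn_I^{-1}\in W_I$; this gives $n_IW_In_I^{-1}\subseteq W_I$, and by finiteness of $W_I$, $n_I\in N_W(W_I)$.

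Therefore $w=w_In_I$ is a reduced decomposition with $I\subseteq S$ spherical, $w_I\in W_I\setminus\{1\}$, and $n_I\in N_W(W_I)$, contradicting torsion-freeness. The only non-routine step is the identification of $W_I$ with the pointwise stabiliser of $L$, which is where the hypothesis on $x$ enters crucially.
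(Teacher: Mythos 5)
Your argument is correct, and while it shares the paper's geometric skeleton --- the point $x\in\Min(w)\cap C_0$, the axis $L$ through $x$, the key observation that every wall through $x$ must contain $L$ (since none can be transverse to the $w$-axis $L$), and the resulting decomposition $w=w_In_I$ with $I$ the type of the support of $x$ --- the endgame is genuinely different. The paper argues directly: since $w$ permutes the walls containing $L$, it normalises $W_I$; Lusztig's decomposition $N_W(W_I)=W_I\rtimes N_I$ (Lemma~\ref{lemma normaliser}) gives $w=w_In_I$ with length additivity, and Lemma~\ref{lemma log if ess} is then applied a \emph{second} time, to $n_I$ rather than to $w$ (checking that $L$ is an $n_I$-axis and that no wall containing $L$ separates $C_0$ from $n_IC_0$), so that $n_I$ is straight and torsion-freeness, which forces $w_I=1$, yields $w=n_I$ straight. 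You instead argue by contraposition: a non-$w$-essential wall $m$ separating $C_0$ from $wC_0$ must contain $L$, hence $r_m\in W_I$; your inversion-set computation with the minimal coset representative shows $\alpha_m$ is an inversion of $w_I$, so $w_I\neq 1$; and your fixed-point argument ($n_I^{-1}x=w^{-1}x\in L$ is fixed by $W_I$) recovers $n_I\in N_W(W_I)$ without invoking Lemma~\ref{lemma normaliser}. Each route has its merits: the paper's stays entirely geometric and extracts the stronger fact that $n_I$ is always straight, while yours is more self-contained algebraically, trading Lusztig's lemma for elementary facts about minimal coset representatives and inversions. Two small remarks: your claim that $W_I$ is ``exactly'' the pointwise stabiliser of $L$ is more than you prove or need (only $W_I\subseteq\operatorname{Fix}(L)$ is used, though the reverse inclusion is immediate since anything fixing $L$ fixes $x$); and you should note explicitly that the decomposition $w=w_In_I$ you produce is reduced, i.e.\ $\ell(w)=\ell(w_I)+\ell(n_I)$, which is exactly what the minimal-coset-representative property supplies and what the definition of torsion-freeness requires.
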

\begin{proof}
Pick $x\in \Min(w)\cap C_0$ such that $x$ does not lie on any $w$-essential wall, and let $L$ be the $w$-axis through $x$. Let $A\in\Sigma$ be the support of $x$, say $A=W_I$ for some spherical subset $I\subseteq S$. Then the walls containing $x$ (that is, the walls containing $A$, or else the walls with associated roots in $\Phi_I$) also contain $L$. 
It follows that $w$ stabilises $\Phi_I$ and hence normalises $W_I$. Consider the decomposition $N_W(W_I)=W_I\rtimes N_I$ provided by Lemma~\ref{lemma normaliser} and write $w=w_In_I$ accordingly, with $w_I\in W_I$ and $n_I\in N_I$.

Notice that $L$ is also an axis for $n_I$ as $n_Ix=n_I(n_I\inv w_In_I)x=w_In_Ix=wx\in L$ and $n_I\inv x=n_I\inv w_I\inv x=w\inv x\in L$. Moreover, no wall containing $L$ separates $C_0$ from $n_IC_0$. Indeed, as $C_0$ is contained in the intersection of all roots $\alpha_s$ with $s\in I$, so is $n_IC_0$, yielding the claim. Hence $n_I$ is straight by Lemma~\ref{lemma log if ess}.

Now, if $w$ is torsion-free, then $w_I=1$ and $w=n_I$ is straight. The converse was established in Lemma~\ref{lemma log implies}.
\end{proof}

\begin{lemma}\label{lemma CFC help}
Let $u\in W$ be cyclically reduced. Then there is some $w\in W$ with $u\sim_{\kappa} w$ such that $u$ is straight if and only if $w$ is torsion-free.
\end{lemma}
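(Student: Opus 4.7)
My plan is to split according to whether $u$ has infinite or finite order, with the infinite-order case absorbing essentially all the work by chaining previous lemmas.

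In the infinite-order case, Corollary~\ref{corollary sim to C0} furnishes an element $w\in W$ with $u\sim_{\kappa} w$ such that $\Min(w)\cap C_0$ is nonempty and not contained in any $w$-essential wall. Crucially, since $u$ is cyclically reduced and every step of the $\sim_{\kappa}$-chain from $u$ to $w$ is a cyclic shift of a reduced expression (which remains reduced on a cyclically reduced element, and whose own cyclic shifts form the same set), $w$ is itself cyclically reduced and $\ell(u)=\ell(w)$. Writing $w=v^{-1}uv$ for some $v\in W$, Lemma~\ref{lemma log iff conj log} gives that $u$ is straight if and only if $w$ is straight, and then Lemma~\ref{lemma straight iff tf}, whose hypotheses are exactly those produced by Corollary~\ref{corollary sim to C0}, gives that $w$ is straight if and only if $w$ is torsion-free. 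Concatenating the two equivalences completes this case.

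For the finite-order case I simply take $w=u$. If $u=1$ both sides hold trivially, so assume $u\neq 1$. Then $u$ is not straight, since $\ell(u^N)=0<N\ell(u)$ for $N$ equal to the order of $u$, and I must check that $u$ is not torsion-free. Since $u$ is cyclically reduced of finite order, Corollary~\ref{corintro equiv cycl red} ensures $u$ is of minimal length in its conjugacy class, so by \cite[Proposition~4.2]{MR2585575} its parabolic closure $\Pc(u)$ is standard, say $\Pc(u)=W_I$ for some $I\subseteq S$. Because $u$ has finite order, $W_I=\Pc(u)$ is finite, hence $I$ is spherical, and then $u=u\cdot 1$ is a reduced decomposition with $u\in W_I\setminus\{1\}$ and $1$ normalising $W_I$, witnessing that $u$ is not torsion-free.

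The main obstacle is really the bookkeeping in the infinite-order case: one must confirm that the element $w$ produced by Corollary~\ref{corollary sim to C0} is actually cyclically reduced of the same length as $u$ (so that Lemma~\ref{lemma log iff conj log} can be invoked) and simultaneously sits in the precise geometric configuration demanded by Lemma~\ref{lemma straight iff tf}. These two requirements are automatic once one observes that $\sim_{\kappa}$ preserves cyclic reducedness, but without this observation the argument does not go through, since $\sim_{\kappa}$ is \emph{a priori} only length non-increasing and not symmetric.
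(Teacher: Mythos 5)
Your proof of the main (infinite-order) case is correct and is essentially the paper's own argument: invoke Corollary~\ref{corollary sim to C0} to produce $w$ with $u\sim_{\kappa}w$ and $\Min(w)\cap C_0$ nonempty and not contained in any $w$-essential wall, note $\ell(u)=\ell(w)$ because $u$ is cyclically reduced, and chain Lemma~\ref{lemma log iff conj log} with Lemma~\ref{lemma straight iff tf}. The one genuine difference is that you also treat the case where $u$ has finite order, which the paper's one-line proof silently skips even though Corollary~\ref{corollary sim to C0} is only stated for elements of infinite order; your argument there (take $w=u$, use Corollary~\ref{corintro equiv cycl red} to make $\Pc(u)$ a standard spherical $W_I$, and exhibit $u=u\cdot 1$ as a witness of non-torsion-freeness) is valid and non-circular, since the proof of Corollary~\ref{corintro equiv cycl red} does not rely on this lemma. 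An equivalent route, closer to the paper's toolkit, would be to use Lemma~\ref{lemma geom combin CCCC} and Proposition~\ref{prop CCCC cap Min} directly to find $w_1\sim_{\kappa}u$ fixing a face of $C_0$, but either way the finite-order case is dispatched correctly.
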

\begin{proof}
Let $w$ be as in Corollary~\ref{corollary sim to C0}, so that in particular $\ell(u)=\ell(w)$. As $u$ is straight if and only if $w$ is straight by Lemma~\ref{lemma log iff conj log}, the claim readily follows from Lemma~\ref{lemma straight iff tf}.
\end{proof}

\noindent
{\bf Proof of Theorem~\ref{mainthm intro}.}
Let $u\in W$ be cyclically reduced. The implication (1) $\Rightarrow$ (2) follows from Lemma~\ref{lemma log iff conj log} together with Lemma~\ref{lemma log implies}. Assume now that (2) holds. Let $w\in W$ with $u\sim_{\kappa} w$ be as in Lemma~\ref{lemma CFC help}. Then $w$ is torsion-free by assumption, and hence $u$ is straight, as desired.
\hspace{\fill}$\Box$

\medskip

To prove Corollary~\ref{corintro CFC}, we need one more technical lemma.

\begin{lemma}\label{lemma FC tf}
Let $w\in W$ be an FC element. Assume that $w$ is not torsion-free. Then the standard parabolic closure of $w$ possesses a nontrivial spherical irreducible component.
\end{lemma}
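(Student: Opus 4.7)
The plan is to argue by contrapositive: given a reduced decomposition $w=w_In_I$ with $I\subseteq S$ spherical, $w_I\in W_I\setminus\{1\}$ and $n_I$ normalising $W_I$, I will exhibit a nontrivial spherical irreducible component of the standard parabolic closure $W_T$ of $w$, where $T:=\text{supp}(w)$. Using the semidirect decomposition $N_W(W_I)=W_I\rtimes N_I$ from Lemma~\ref{lemma normaliser}, I first absorb the $W_I$-part of $n_I$ into $w_I$, so that I may assume $n_I\in N_I$ while keeping $w_I\in W_I\setminus\{1\}$ and $w=w_In_I$ reduced. Setting $K:=\text{supp}(w_I)\subseteq I$, my aim is to prove that for every $s\in K$ the irreducible component of the Coxeter diagram of $T$ containing $s$ is contained in $I$, hence is spherical (and nontrivial, as it contains $s$).

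\medskip

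The key technical step is a commutation lemma for right descents: if $s\in I$ is a right descent of $w_I$, so $w_I=vs$ is reduced, then $s$ commutes with every generator in $\text{supp}(n_I)$. Indeed, the diagram automorphism induced by $n_I$ yields $sn_I=n_Is'$ for some $s'\in I$, giving two reduced expressions $w=vsn_I=vn_Is'$ of the FC element $w$. Since FC-equivalence is generated by commutations, these expressions have the same multiset of letters, which forces $s=s'$, so $sn_I=n_Is$ and the expressions $vsn_I$ and $vn_Is$ both realise the heap $H(w)$. Now suppose for contradiction that some $t\in\text{supp}(n_I)$ satisfies $m(s,t)\geq 3$, and let $\tau_s$ be the heap element corresponding to the ``extra'' $s$ sitting between $v$ and $n_I$, and $\tau_t$ any heap element labelled $t$ in the $n_I$-part. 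In the expression $vsn_I$, $\tau_s$ lies at position $\ell(v)+1$ and $\tau_t$ strictly further right; since the labels do not commute, this forces $\tau_s\leq_H\tau_t$. But in $vn_Is$ the same $\tau_s$ sits at position $\ell(v)+\ell(n_I)+1$, strictly past $\tau_t$, contradicting the fact that linear extensions respect the heap order. Hence no such $t$ exists.

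\medskip

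The main claim is then proved by induction on $\ell(w_I)$. For $\ell(w_I)=1$, one has $w_I=s$ and $K=\{s\}$; the commutation lemma yields that $s$ commutes with $\text{supp}(n_I)$, so the component of $s$ in $T$ is $\{s\}\subseteq I$. For $\ell(w_I)\geq 2$, pick a right descent $s$ of $w_I$; the commutation lemma gives $sn_I=n_Is$, so $w':=ws=vn_I$ is a reduced expression for an FC element with $v=w_Is\in W_I\setminus\{1\}$, and this decomposition witnesses that $w'$ is again not torsion-free. The inductive hypothesis applied to $w'$ yields that for every $r\in K':=\text{supp}(v)$, the irreducible component of $r$ in $T':=\text{supp}(w')$ is contained in $I$. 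Since $T=T'\cup\{s\}$ with $s\in I$, and since by the commutation lemma every neighbour of $s$ in $T$ lies in $K'\subseteq I$, a direct case analysis (distinguishing whether $s\in T'$ and whether $s\in K'$) shows that the component in $T$ of every element of $K=K'\cup\{s\}$ remains contained in $I$, completing the induction.

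\medskip

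The principal difficulty lies in the commutation lemma, and specifically in the clean identification of the ``extra'' $s$-labelled heap element across the two reduced expressions $vsn_I$ and $vn_Is$. The preliminary reduction to $n_I\in N_I$ is indispensable here: without it, $s'=n_I^{-1}sn_I$ need not be a simple reflection, and the multiset identification $s=s'$ would break down. Once the commutation lemma is secured, the inductive step amounts to book-keeping: peeling off one right descent of $w_I$ produces a strictly shorter torsion-witnessing instance, and the commutativity of that descent with $\text{supp}(n_I)$ guarantees that the component structure transfers cleanly from $T'$ to $T$.
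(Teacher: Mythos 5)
Your proof is correct and follows essentially the same route as the paper's: decompose $w=w_In_I$, use the FC condition to show that each descent of $w_I$ peeled off in an induction on $\ell(w_I)$ commutes with every generator of $\mathrm{supp}(n_I)$, and conclude by a connectivity argument on the Coxeter diagram that the irreducible component of $\mathrm{supp}(w)$ meeting $\mathrm{supp}(w_I)$ is contained in $I$, hence spherical and nontrivial. Two minor observations: your explicit reduction to $n_I\in N_I$ is a worthwhile clarification that the paper leaves implicit (it is what makes $n_I^{-1}sn_I$ a simple reflection of $I$), while your identification of ``the same $\tau_s$'' across the two words $vsn_I$ and $vn_Is$ is only literally correct when $s\notin\mathrm{supp}(n_I)$ --- the watertight version of that step is to project both words onto the non-commuting pair $\{s,t\}$, obtaining $\pi_v s\pi_n=\pi_v\pi_n s$, hence $\pi_n\in\{s\}^{*}$, contradicting $t\in\mathrm{supp}(n_I)$.
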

\begin{proof}
Write $w=w_In_I$ for some spherical $I\subseteq S$, some $w_I\in W_I\setminus\{1_W\}$ and some $n_I\in W$ normalising $W_I$. Let $S_1$ (resp. $S_2$) denote the set of generators of $S$ appearing in a reduced decomposition of $n_I$ (resp. $w_I$). Thus the standard parabolic closure of $w$ coincides with $W_{S_1\cup S_2}$.

Note that if $\ell(w_I)=1$, say $w_I=s\in I$, then as $sn_I=n_Is'$ for some $s'\in I$, the FC condition implies that $s=s'$ and that $s$ commutes with every generator of $n_I$. Moreover, given an FC element $u$ and a reduced decomposition of the form $u=u_1u_2u_3$, the element $u_2$ is still FC. An easy induction on $\ell(w_I)$ thus yields that every generator in $S_1$ commutes with every generator in $S_2$. 

Let now $s\in S_2$ and let $W_T$ be the irreducible component of $W_{S_1\cup S_2}$ containing $s$. Write $T_i=T\cap S_i$ for $i=1,2$, so that $T=T_1\cup T_2$. Set also $T_1'=T_1\setminus (T_1\cap T_2)$. Then $T$ is the disjoint union of $T_1'$ and $T_2$, and every generator of $T_1'$ commutes with every generator of $T_2$. As $W_T$ is irreducible and $T_2$ contains $s$, we deduce that $T_1'=\emptyset$, that is, $T_1\subseteq T_2$. Hence $T=T_2\subseteq I$ and $W_T\subseteq W_I$ is spherical, as desired.
\end{proof}

\noindent
{\bf Proof of Corollary~\ref{corintro CFC}.}
The implication (1) $\Rightarrow$ (2) follows from Lemma~\ref{lemma log implies}. Let now $u$ be a CFC element whose standard parabolic closure has only infinite irreducible components. Note that the CFC condition implies that $u$ is cyclically reduced. Let $w$ be the conjugate of $u$ provided by Lemma~\ref{lemma CFC help}, so that in particular the standard parabolic closures of $w$ and $u$ coincide. As $w$ is FC (since every cyclic shift of a CFC element is again CFC, see e.g. \cite[Proposition~4.1]{CFCCoxeter}), it follows from Lemma~\ref{lemma FC tf} that it is torsion-free. Thus $u$ is straight by Lemma~\ref{lemma CFC help}, as desired. \hspace{\fill}$\Box$

\medskip

\noindent
{\bf Proof of Corollary~\ref{corintro Coxeter element}.}
As Coxeter elements are CFC elements, this readily follows from Corollary~\ref{corintro CFC}. \hspace{\fill}$\Box$

\bibliographystyle{amsalpha} 
\bibliography{these} 

\end{document}